\newlength{\defbaselineskip}
\newcommand{\setlinespacing}[1]%
           {\setlength{\baselineskip}{#1 \defbaselineskip}}
\theoremstyle{plain}
\newtheorem{thm}{Theorem}[section]
\newtheorem{lem}[thm]{Lemma}
\newtheorem{prop}[thm]{Proposition}
\newtheorem{exam}[thm]{Example}
\theoremstyle{definition}
\makeatletter\@addtoreset{equation}{section} \makeatother
\begin{document}

\title {Mean-field type Quadratic BSDEs}

\author{H\'el\`ene Hibon\thanks{IRMAR,
Universit\'e Rennes 1, Campus de Beaulieu, 35042 Rennes Cedex, France。 } 
\and
Ying Hu\thanks{IRMAR,
Universit\'e Rennes 1, Campus de Beaulieu, 35042 Rennes Cedex, France (ying.hu@univ-rennes1.fr) 
and School of
Mathematical Sciences, Fudan University, Shanghai 200433, China.
Partially supported by Lebesgue center of mathematics ``Investissements d'avenir"
program - ANR-11-LABX-0020-01,  by ANR CAESARS - ANR-15-CE05-0024 and by ANR MFG - ANR-16-CE40-0015-01.} 
\and
Shanjian Tang\thanks{Department of Finance and Control Sciences, School of
Mathematical Sciences, Fudan University, Shanghai 200433, China (e-mail: sjtang@fudan.edu.cn).
Partially supported by National Science Foundation of China (Grant No. 11631004)
and Science and Technology Commission of Shanghai Municipality (Grant No. 14XD1400400).   }}
\maketitle

\begin{abstract}
In this paper, we give several new results on solvability of a quadratic BSDE whose generator depends also on the mean of both variables. First, we consider such a BSDE using John-Nirenberg's inequality for BMO martingales to estimate its contribution to the evolution of the first unknown variable. Then we consider
the BSDE having an additive expected value of a quadratic generator in addition to the usual quadratic one. In this case, we use a deterministic shift transformation to the first unknown variable, when the usual quadratic generator depends neither on the first variable nor its mean, the general case can be treated by a fixed point argument.
\end{abstract}

\section{Introduction}

Let $\{W_t:=(W_t^1, \ldots, W_t^d)^*, 0\le t\le T\}$ be a
$d$-dimensional standard Brownian motion defined on some
probability space $(\Omega, \mathscr{F}, \mathbb P)$. Denote by $\{\mathscr{F}_t,0\le t\le T\}$ the augmented natural filtration of the
standard Brownian motion $W$.

In this paper, we study the existence and uniqueness of an adapted solution of  the following BSDE:
\begin{equation}\label{new qbsde}
Y_t=\xi+\int_t^T f(s,Y_s,\mathbb E[Y_s],  Z_s, \mathbb E[Z_s])\, ds-\int_t^TZ_s\cdot dW_s, \quad t\in [0,T].
\end{equation}

When $f$ does not depend on $(\bar{y}, \bar{z})$, BSDE~(\ref{new qbsde}) is the classical one, and it is extensively studied in the literature, see the pioneer work of Bismut \cite{Bismut1973, Bismut1976} as well as Pardoux and Peng \cite{PardouxPeng1990}.
When $f(t,y,\bar{y},z, \bar{z})$ is scalar valued and quadratic in $z$ while it does not depend on $(\bar{y},\bar{z})$, BSDE~(\ref{new qbsde}) is the so-called quadratic BSDE and has been  studied by Kobylanski~\cite{Kobylanski2000}, Briand and Hu~\cite{BriandHu2006,BriandHu2008}. BSDE~(\ref{new qbsde}) (called mean-field type BSDE) arises naturally when studying mean-field games, etc. We refer to 
\cite{BuckdahnLiPeng} for the motivation of its study.
When the generator $f$ is uniformly Lipschitz in the last four arguments, BSDE~(\ref{new qbsde}) is shown in a straightforward manner to have a unique adapted solution, and the reader is referred to Buckdahn et al.~\cite{BuckdahnLiPeng} for more details. For the general generator $f(s,y,\bar{y},z,\bar{z})$
depending quadratically on $z$,  BSDE~(\ref{new qbsde}) is a quadratic one  involving both $\mathbb E[Y]$ and $\mathbb E[Z]$. The comparison principle (see \cite{Kobylanski2000}) is well known to play a crucial role in the study  of quadratic BSDEs  (see \cite{Kobylanski2000}). Unfortunately, the comparison principle fails to hold
for BSDE (\ref{new qbsde}) (see , e.g. \cite{BuckdahnLiPeng} for a counter-example for comparison with Lipschitz generators),
the derivation of its solvability is not straightforward. Up to our best knowledge, no study on quadratic mean-field type BSDEs is available.
To tackle the difficulty of lack of comparison princilpe,  we use the John-Nirenberg inequality for BMO martingales to address the solvability.

Furthermore, we study the following alternative of mean-field type BSDE, which admits a quadratic growth in the mean of the second unknown variable $\mathbb E[Z]$:
\begin{eqnarray}
Y_t& = &\xi+\int_t^T\left[f_1(s,Y_s,\mathbb E[Y_s],Z_s,\mathbb E[Z_s])+E[f_2(s,Y_s,\mathbb E[Y_s],Z_s,\mathbb E[Z_s])\right]\, ds\nonumber\\
& & -\int_t^TZ_s\cdot dW_s,\label{bsde:intro}
\end{eqnarray}
where $f_2$ is allowed to grow quadratically  in both $Z$ and $\mathbb E[Z]$, the function $f_1$ also admits a quadratic growth in the second unknown variable for the scalar case.

To deal with the additive expected value of $f_2$, Cheridito and Nam \cite{CheriditoNam2} introduced 
Krasnoselskii fixed point theorem to conclude the existence and uniqueness, by observing that the range of the expected value of $f_2$ is (locally) compact. Here we observe the following fact: the expected value of $f_2$ has no contribution to the second unknown variable $Z$ if $f_1$ depends  neither on the first variable $Z$ nor on its mean. Hence we use the shift transformation to remove the expectation of $f_2$.
In the general case, we apply the same kind of technique and the contraction mapping principle. 

Let us close this section by introducing some notations.
Denote by ${\cal S}^\infty(\mathbb R^n)$ the totality of $\mathbb R^n$-valued $\mathscr{F}_t$-adapted essentially bounded continuous processes, and by $||Y||_\infty$ the essential supremum norm of $Y\in {\cal S}^\infty(R^n)$. It can be verified that $({\cal S}^\infty(\mathbb R^n), ||\cdot||_\infty)$ is a Banach space. Let $M=(M_t, \mathscr{F}_t)$ be a uniformly integrable martingale with $M_0=0$, and for
$p\in [1, \infty)$ we set
\begin{equation}
\begin{split}
 \|M\|_{BMO_p(\mathbb P)}:= \sup_{\tau} \left| \mathbb E_\tau\left[\left(\langle M\rangle_\tau^\infty\right)^{p\over2}\right]^{1\over p}\right|_\infty
\end{split}
\end{equation}
where the supremum is taken over all stopping times $\tau$. The class $\{M : \|M\|_{BMO_p} <
\infty\}$ is denoted by $BMO_p$, which is  written as $BMO_p(\mathbb P)$ whenever it is necessary to indicate the underlying probability, and observe that $\|\cdot\|_{BMO_p}$ is a norm on this space and $BMO_p(\mathbb P)$ is a Banach space.

Denote by $\mathscr{E}(M)$ the stochastic exponential of a one-dimensional local martingale $M$ and by $\mathscr{E}(M)_s^t$ that of $M_t-M_s$. Denote by $\beta \cdot M$ the stochastic integral of a scalar-valued adapted  process $\beta$ with respect to a local continuous martingale $M$.

For any real $p\ge 1$, ${\cal S}^p(\mathbb R^n)$ denotes the set of $\mathbb R^n$-valued adapted and c\`adl\`ag processes $(Y_t)_{t \in [0,T]}$ such that
$$||Y||_{{\cal S}^p(\mathbb R^n)}:=\mathbb{E} \left[\sup_{0 \le t\leq T} |Y_t|^p \right]^{1/p} < + \infty,$$
and ${\cal M}^p(\mathbb R^{d\times n})$ denotes the set of adapted processes $(Z_t)_{t \in [0,T]}$ with values in $\mathbb{R}^{d \times n}$ such that
$$||Z||_{{\cal M}^p(\mathbb{R}^{d \times n})}:=\mathbb{E}\left[\left(\int_0^T |Z_s|^2 ds \right)^{p/2}\right]^{1/p} < +\infty.$$

The rest of our paper is organized as follows. In Section 2, we study BSDE~(\ref{new qbsde}) when $f(t,y,\bar{y},z, \bar{z})$ is scalar valued and quadratic in $z$, and  uniformly Lipschitz in $(y,\bar{y},\bar{z})$, and prove by the contraction mapping principle that BSDE~(\ref{new qbsde}) has a unique solution. In Section 3, we study scalar-valued BSDE~(\ref{bsde:intro}) when $f_2(t,y,\bar{y},z, \bar{z})$ is  both quadratic in $z$ and $\bar{z}$, and $f_1$ is quadratic in $z$.
Finally, in Section 4, we study BSDE~(\ref{bsde:intro}) in the multi-dimentional case, where we suppose that  $f_2(t,y,\bar{y},z, \bar{z})$ is  both quadratic in $z$ and $\bar{z}$, and $f_1$ is Lipschitz in $z$ and $\bar{z}$.

\section{Quadratic BSDEs with a mean term involving the  second unknown variable}


In this section we consider the following BSDE:
\begin{equation}\label{new bsde}
Y_t=\xi+\int_t^T f(s,Y_s,\mathbb E[Y_s], Z_s, \mathbb E[Z_s])\, ds-\int_t^TZ_s\cdot dW_s, \quad t\in [0,T]
.
\end{equation}

We first recall the following existence and uniqueness, a priori estimate for one-dimensional BSDEs.

\begin{lem}\label{Quad bsde} Assume that (i) the function $f: \Omega\times [0,T]\times \mathbb R\times  \mathbb R^d\to \mathbb R$  has the following  growth and locally Lipschitz continuity in the last two variables:
\begin{equation}
\begin{split}
|f(s,y, z)|\le&\  g_s+\beta|y|+{\gamma\over2}|z|^2, \quad y\in \mathbb R,  z\in R^d;\\
|f(s,y,z_1)-f(s,y,z_2)|\le & C|y_1-y_2|+C(1+|z_1|+|z_2|)|z_1-z_2|, \quad y_1,y_2\in \mathbb R, z_1,z_2\in  \mathbb R^d;
\end{split}
\end{equation}
(ii) the process $f(\cdot,y, z)$ is $\mathscr{F}_t$-adapted for each $y\in \mathbb R$,  $z\in \mathbb R^d$; and (iii)  $g\in L^1(0,T)$. Then for bounded $\xi$, the following BSDE
\begin{equation}\label{new bsde}
Y_t=\xi+\int_t^T [f(s,Y_s, Z_s)]\, ds-\int_t^TZ_s\cdot dW_s, \quad t\in [0,T]
\end{equation}
has a unique solution $(Y, Z)$ such that $Y$ is (essentially) bounded and $Z\cdot W$ is a BMO martingale. Furthermore, we have
$$
e^{\gamma |Y_t|}\le \mathbb E_t\left[e^{\gamma e^{\beta(T-t)} \xi+\gamma\int_t^T|g(s)|e^{\beta(s-t)}\, ds }\right].
$$
\end{lem}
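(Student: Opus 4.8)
This is the statement of a standard one–dimensional quadratic BSDE with a locally Lipschitz generator, so the plan is to assemble it from the now–classical ingredients and then read off the exponential bound from an It\^o/change–of–variables computation.

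\emph{Uniqueness, $BMO$ property, existence.} For uniqueness I would take two bounded solutions $(Y,Z),(Y',Z')$, subtract, and linearise via the local Lipschitz hypothesis: $f(s,Y_s,Z_s)-f(s,Y'_s,Z'_s)=\lambda_s(Y_s-Y'_s)+\eta_s\cdot(Z_s-Z'_s)$ with $|\lambda_s|\le C$ and $|\eta_s|\le C(1+|Z_s|+|Z'_s|)$; since $Z\cdot W$ and $Z'\cdot W$ are $BMO$, so is $\eta\cdot W$, hence $\mathscr E(\eta\cdot W)$ is a true martingale (Kazamaki) and a Girsanov change of measure together with the integrating factor $\exp(\int_0^\cdot\lambda_s\,ds)$ forces $Y\equiv Y'$, then $Z\equiv Z'$. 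For the $BMO$ property, once $\|Y\|_\infty<\infty$ is known I would apply It\^o to $e^{2\gamma Y_t}$: the coefficient of $|Z_t|^2$ equals $2\gamma^2-2\gamma\cdot\tfrac\gamma2=\gamma^2>0$, so from $|f|\le g_s+\beta|y|+\tfrac\gamma2|z|^2$, after localising, $\gamma^2\,\mathbb E_\tau[\int_\tau^T e^{2\gamma Y_s}|Z_s|^2\,ds]\le \mathbb E_\tau[e^{2\gamma\xi}]+\mathbb E_\tau[\int_\tau^T e^{2\gamma Y_s}(2\gamma g_s+2\gamma\beta|Y_s|)\,ds]$, whence $\sup_\tau\mathbb E_\tau[\int_\tau^T|Z_s|^2\,ds]\le\gamma^{-2}e^{4\gamma\|Y\|_\infty}(1+2\gamma\|g\|_{L^1}+2\gamma\beta T\|Y\|_\infty)$. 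For existence I would run Kobylanski's monotone–stability scheme (or Tevzadze's fixed point in $\mathcal S^\infty\times BMO$ followed by pasting, which yields the $BMO$ bound directly): approximate $f$ by generators $f_n$ that are globally Lipschitz in $(y,z)$, obey the same growth bound, and converge to $f$ appropriately; the $f_n$–BSDEs have unique solutions $(Y^n,Z^n)$, the $Y^n$ are uniformly bounded by the a priori estimate below, the $Z^n\cdot W$ uniformly $BMO$ by the previous display, and one passes to the limit.

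\emph{The exponential estimate.} Given a bounded solution, the Tanaka–It\^o formula for $|Y_t|$, together with $\mathrm{sgn}(Y_t)f(t,Y_t,Z_t)\le |f(t,Y_t,Z_t)|\le g_t+\beta|Y_t|+\tfrac\gamma2|Z_t|^2$ and the nonnegativity of the local time of $Y$ at $0$, shows that $|Y|$ is a subsolution of the BSDE with generator $g_t+\beta y+\tfrac\gamma2|z|^2$ and terminal value $|\xi|$. The change of variable $P_t:=e^{\gamma|Y_t|}$ then removes the quadratic part (cancellation $\tfrac{\gamma^2}{2}-\gamma\cdot\tfrac\gamma2=0$) and gives, after absorbing the extra increasing term into the inequality, $-dP_t\le (\gamma g_t+\beta\ln P_t)P_t\,dt-\gamma P_t\,\mathrm{sgn}(Y_t)Z_t\cdot dW_t$, $P_T=e^{\gamma|\xi|}$, $P\ge 1$ (here $Z\cdot W\in BMO$ and $P$ bounded make the stochastic integral a true martingale). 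I would then check that the bounded process
\[
N_t:=\mathbb E_t\!\left[\exp\Big(\gamma e^{\beta(T-t)}|\xi|+\gamma\int_t^T e^{\beta(s-t)}g_s\,ds\Big)\right]
\]
is a \emph{supersolution} of the very same equation $-d\pi_t=(\gamma g_t+\beta\ln\pi_t)\pi_t\,dt-(\cdot)\,dW_t$, $\pi_T=e^{\gamma|\xi|}$: writing $N_t=e^{-\gamma e^{-\beta t}A_t}\,\mathbb E_t[e^{\gamma e^{-\beta t}\Lambda}]$ with $A_t=\int_0^t e^{\beta s}g_s\,ds$ and $\Lambda=e^{\beta T}|\xi|+A_T$, an It\^o computation reduces the supersolution inequality to $c\,\kappa'(c)\ge\kappa(c)$ for $c=\gamma e^{-\beta t}$, where $\kappa(u)=\ln\mathbb E_t[e^{u\Lambda}]$ is the conditional cumulant generating function of $\Lambda$ — which is convex and vanishes at $0$, so the inequality is immediate. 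Since $(t,\pi)\mapsto(\gamma g_t+\beta\ln\pi)\pi$ is Lipschitz in $\pi$ on the bounded range covered by $P$ and $N$, with an $L^1(0,T)$ Lipschitz constant, the comparison principle gives $P_t\le N_t$, i.e. $e^{\gamma|Y_t|}\le N_t$, which is the asserted inequality.

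\emph{Main obstacle.} The delicate point is precisely this a priori estimate, which also drives the $L^\infty$ and $BMO$ bounds used everywhere else: there is no comparison principle for the mean–field BSDE, but for this auxiliary scalar equation one does hold, and the real work is to push the quadratic term correctly through the exponential change of variable and to justify every measure change and martingale identification by the John–Nirenberg / Kazamaki $BMO$ machinery.
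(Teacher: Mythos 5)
The paper does not actually prove this lemma: it is stated as a recollection of known results for scalar quadratic BSDEs (Kobylanski; Briand--Hu), so there is no internal proof to measure you against. Your reconstruction is correct in all four components, and the linearization/Girsanov uniqueness argument, the $BMO$ bound from It\^o applied to $e^{2\gamma Y}$ (the coefficient bookkeeping $2\gamma^2-2\gamma\cdot\tfrac{\gamma}{2}=\gamma^2$ is right), and the monotone-stability or Tevzadze-plus-pasting existence scheme are exactly the standard ingredients the authors are implicitly citing. The one place where you take a genuinely different, and noticeably heavier, route is the exponential a priori estimate. The classical argument (Briand--Hu, Proposition 3 of the 2006 paper) is a single computation: fix $t$, apply It\^o--Tanaka to show that
\[
H_s:=\exp\Bigl(\gamma e^{\beta(s-t)}|Y_s|+\gamma\int_t^s e^{\beta(r-t)}|g_r|\,dr\Bigr),\qquad s\in[t,T],
\]
is a local submartingale --- the drift is $\gamma e^{\beta(s-t)}H_s\bigl(\beta|Y_s|+|g_s|-\mathrm{sgn}(Y_s)f+\tfrac{\gamma}{2}e^{\beta(s-t)}|Z_s|^2\bigr)\,ds$ plus a nonnegative local-time term, and it is nonnegative because $e^{\beta(s-t)}\ge 1$ and $\mathrm{sgn}(Y_s)f\le|f|\le |g_s|+\beta|Y_s|+\tfrac{\gamma}{2}|Z_s|^2$ --- whence $H_t\le\mathbb E_t[H_T]$ after localization, which is precisely the asserted inequality. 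Your detour through a subsolution $P=e^{\gamma|Y|}$ and a supersolution $N$ of the auxiliary BSDE with generator $(\gamma g_t+\beta\ln\pi)\pi$ does work (the key inequality $c\,\kappa'(c)\ge\kappa(c)$ for the convex conditional cumulant generating function is correct, and the comparison theorem you need tolerates an $L^1(0,T)$-in-time Lipschitz constant since $P$ and $N$ are bounded away from $0$ and $\infty$), but it asks you to verify a supersolution property by a nontrivial It\^o computation on a conditional expectation with a time-dependent exponent, which is exactly the step you leave sketchiest; the direct submartingale argument makes that whole layer unnecessary. Two minor points: the statement's right-hand side should read $|\xi|$ rather than $\xi$ (a typo in the paper, and your $N_t$ correctly uses $|\xi|$), and since $g$ is merely in $L^1(0,T)$ rather than bounded, the existence step should cite the Briand--Hu variant of the stability/localization argument rather than Kobylanski's original boundedness hypotheses --- harmless here because $g$ is deterministic, so $\exp(\gamma\int_0^T|g_s|\,ds)$ is a finite constant.
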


 The following lemma plays an important role in our subsequent arguments. It indicates that  following the proof of \cite[Theorem 3.3, page 57]{Kazamaki} can give a more precise dependence of the two constants $c_1, c_2$ on $\beta\cdot M$.

\begin{lem} For $K>0$, there are constants $c_1>0$ and $c_2>0$ depending only on $K$ such that for any BMO martingale $M$, we have for any one-dimensional BMO martingale $N$ such that $\|N\|_{BMO_2(\mathbb P)}\le K$,
\begin{equation}\label{uniform BMO estimate}
c_1\|M\|_{BMO_2(\mathbb P)}\le \|\widetilde M\|_{BMO_2(\widetilde {\mathbb P})}\le c_2\|M\|_{BMO_2(\mathbb P)}
\end{equation}
where ${\widetilde M}:=M-\langle M, N\rangle$ and $d\widetilde {\mathbb P}:= \mathscr{E}(N)_0^\infty d \mathbb P$.
\end{lem}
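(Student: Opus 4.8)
The plan is to follow the proof of Kazamaki's Theorem 3.3 while tracking the dependence of all constants on $K$ (a bound on $\|N\|_{BMO_2(\mathbb P)}$) rather than on $N$ itself. Recall the structure of the classical argument: one shows that the Girsanov transform $M\mapsto\widetilde M$ is bounded both ways between $BMO_2(\mathbb P)$ and $BMO_2(\widetilde{\mathbb P})$, and the constants in Kazamaki's statement depend only on $\|N\|_{BMO(\mathbb P)}$ through the energy inequalities for $\mathscr E(N)$ and its reciprocal, together with the reverse Hölder inequality. Since here $\|N\|_{BMO_2(\mathbb P)}\le K$, every such auxiliary constant can be replaced by one depending only on $K$, which is exactly the refinement claimed.

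Concretely, I would proceed as follows. First, fix a stopping time $\tau$ and write, for the upper bound, $\mathbb E_\tau\big[\langle\widetilde M\rangle_\infty-\langle\widetilde M\rangle_\tau\big]$ in terms of a $\widetilde{\mathbb P}$-conditional expectation; since $\langle\widetilde M\rangle=\langle M\rangle$, this reduces to estimating $\widetilde{\mathbb E}_\tau\big[\langle M\rangle_\infty-\langle M\rangle_\tau\big]$ against $\mathbb E_\tau\big[\langle M\rangle_\infty-\langle M\rangle_\tau\big]$. The change of measure introduces a factor $\mathscr E(N)_\tau^{-1}\mathbb E_\tau[\mathscr E(N)_\infty\,(\cdots)]$; by Hölder with a conjugate pair $(p,q)$ one separates a reverse-Hölder term $\mathbb E_\tau[(\mathscr E(N)_\infty/\mathscr E(N)_\tau)^p]^{1/p}$, which is bounded by a constant depending only on $p$ and $\|N\|_{BMO}$, hence only on $K$ once $p=p(K)$ is chosen from the reverse Hölder / John–Nirenberg threshold, and a term $\mathbb E_\tau[(\langle M\rangle_\infty-\langle M\rangle_\tau)^q]^{1/q}$, which is controlled by $\|M\|_{BMO_2(\mathbb P)}$ via the energy inequality $\mathbb E_\tau[\langle M\rangle_\infty^q]\le C_q\|M\|_{BMO_2}^{2q}$ (again $C_q=C_{q(K)}$ depends only on $K$). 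This yields the right-hand inequality with $c_2=c_2(K)$. For the left-hand inequality one runs the symmetric argument under $\widetilde{\mathbb P}$, using that $-N$ has a $BMO_2(\widetilde{\mathbb P})$ norm bounded in terms of $K$ (this is itself part of Kazamaki's circle of estimates: the $BMO$ norm of $N$ transfers, with $K$-dependent constants, between $\mathbb P$ and $\widetilde{\mathbb P}$), and that $M=\widetilde M+\langle\widetilde M,N\rangle$ is the Girsanov transform of $\widetilde M$ back under $d\mathbb P=\mathscr E(-N)_0^\infty\,d\widetilde{\mathbb P}$.

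The main obstacle — and the only place requiring genuine care rather than bookkeeping — is making the exponent $p$ (equivalently $q$) and all the intermediate constants depend on $K$ alone and not on $N$. This hinges on the quantitative form of the reverse Hölder inequality for $\mathscr E(N)$: there is a function $\Phi$ with $\Phi(r)\downarrow 1$ as $r\downarrow 1$ such that $\mathbb E_\tau[(\mathscr E(N)_\infty/\mathscr E(N)_\tau)^p]^{1/p}\le$ a finite constant as soon as $\Phi(p)<$ something comparable to $\|N\|_{BMO}^{-1}$; since $\|N\|_{BMO_2(\mathbb P)}\le K$, one may pick a single $p=p(K)>1$ valid for \emph{all} such $N$, and then every subsequent constant inherits dependence on $K$ only. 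Once $p(K)$ is fixed, the remaining steps are the routine Hölder/energy-inequality manipulations sketched above, and combining the two directions gives \eqref{uniform BMO estimate} with $c_1,c_2$ depending only on $K$.
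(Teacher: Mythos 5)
Your proposal is correct and follows essentially the same route as the paper, which gives no written proof but explicitly points to the proof of Theorem 3.3 in Kazamaki's book and notes that tracking the constants there shows they depend only on the $BMO_2$ norm of $N$, hence only on $K$. Your sketch (reverse H\"older inequality for $\mathscr{E}(N)$ with an exponent $p=p(K)$ chosen uniformly for all $N$ with $\|N\|_{BMO_2(\mathbb P)}\le K$, combined with the energy inequalities and the symmetric argument under $\widetilde{\mathbb P}$) is precisely the intended argument.
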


\subsection{Main Results}
We make the following three assumptions. Let $C$ be a positive constant.

\medskip
($\mathscr{A}1$) Assume that there are positive constants $C$ and $\gamma$ and $\alpha\in [0,1)$  such that the function $f: \Omega\times [0,T]\times \mathbb R^2\times (\mathbb R^d)^2\to \mathbb R$  has the following linear-quadratic growth and globally-locally Lipschitz continuity: for $\forall (\omega, s,y_i,\bar y_i, z_i, \bar z_i)\in \Omega\times [0,T]\times \mathbb R^2\times (\mathbb R^d)^2$ with $i=1,2$,
\begin{equation}
\begin{split}
|f(\omega, s,y,\bar y,z, \bar z)|\le&\  C(2+|y|+|\bar y|+|\bar z|^{1+\alpha})+{1\over2}\gamma |z|^2;\\
|f(s,y_1,\bar y_1,z_1, \bar z_1)-f(s,y_2,\bar y_2,z_2, \bar z_2)|
\le& C\biggl\{|y_1-y_2|+|\bar y_1-\bar y_2|+(1+|\bar z_1|^\alpha+|\bar z_2|^\alpha)|\bar z_1-\bar z_2|\\
&+(1+|z_1|+|z_2|)|z_1-z_2|\biggr\};
\end{split}
\end{equation}
The process $f(\cdot,y,\bar y,z,\bar z)$ is $\mathscr{F}_t$-adapted for each $(y,\bar y, z, \bar z)$.

\medskip
($\mathscr{A}2$) The terminal condition $\xi$ is uniformly bounded by $C$.

\medskip

We have the following two theorems.

The first one is a result concerning local solutions. For this, let us introduce some notations. For $\varepsilon>0$, and $r_{\varepsilon}>0$, we define the ball $\mathbb{B}_{\varepsilon}$  by
\begin{eqnarray*}
  \mathbb{B}_{\varepsilon}:=\biggl\{ (Y,Z): & Y\in {\cal S}^\infty, \quad  Z\cdot W\in  BMO_2(\mathbb P),\\
  & ||Y||_{\infty, [T-\varepsilon,T]}+\|Z\cdot W\|_{BMO_2,[T-\varepsilon,T]}\le r_\varepsilon\biggr\}.
\end{eqnarray*}

\begin{thm}\label{thm 1} Let assumptions $(\mathscr{A}1)$ and $(\mathscr{A}2)$ be satisfied with $\alpha\in [0,1)$. Then, for any bounded $\xi$, there exist $\varepsilon>0$ and $r_{\varepsilon}>0$ such that the following BSDE
\begin{eqnarray}
    \begin{split}
Y_t=\, & \xi+\int_t^T f(s,Y_s,\mathbb E[Y_s],Z_s, \mathbb E[Z_s])\, ds-\int_t^T Z_s\cdot dW_s, \quad t\in [0,T]
\end{split}
\end{eqnarray}
has a unique local solution $(Y,Z)$ in the time interval $[T-\varepsilon,T]$ with $(Y,Z)\in \mathbb{B}_{\varepsilon}$.
\end{thm}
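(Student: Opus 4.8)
The plan is a fixed-point (contraction) argument on a short interval $[T-\varepsilon,T]$, with the two mean terms frozen at each iteration. Fix once and for all a radius $R>0$ depending only on the data $C,\gamma,\alpha$ (its precise value is pinned down in the invariance step below), set $r_\varepsilon:=R$, and equip $\mathbb{B}_\varepsilon$ with the norm $\|(Y,Z)\|:=\|Y\|_{\infty,[T-\varepsilon,T]}+\|Z\cdot W\|_{BMO_2,[T-\varepsilon,T]}$, so that it is a complete metric space. Given $(U,V)\in\mathbb{B}_\varepsilon$, the maps $s\mapsto\mathbb{E}[U_s]$ and $s\mapsto\mathbb{E}[V_s]$ are deterministic, the former bounded by $R$ and the latter in $L^2(T-\varepsilon,T)$ by Cauchy--Schwarz; hence $\tilde f(s,y,z):=f(s,y,\mathbb{E}[U_s],z,\mathbb{E}[V_s])$ satisfies the hypotheses of Lemma~\ref{Quad bsde} on $[T-\varepsilon,T]$, with $\beta=C$, the same $\gamma$, and $g_s=C(2+|\mathbb{E}[U_s]|+|\mathbb{E}[V_s]|^{1+\alpha})\in L^1(T-\varepsilon,T)$. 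Lemma~\ref{Quad bsde} then yields a unique solution $(Y,Z)=:\Phi(U,V)$ with $Y$ continuous and bounded and $Z\cdot W\in BMO_2(\mathbb{P})$. Since a pair $(Y,Z)\in\mathbb{B}_\varepsilon$ solves the mean-field BSDE on $[T-\varepsilon,T]$ precisely when it is a fixed point of $\Phi$, it suffices to show that, for $\varepsilon$ small enough, $\Phi$ maps $\mathbb{B}_\varepsilon$ into itself and is a contraction.

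\emph{Invariance.} From $\|U\|_\infty\le R$, $\mathbb{E}\int_{T-\varepsilon}^T|V_s|^2\,ds\le R^2$ and Hölder's inequality (here $\alpha<1$ is used) one gets $\int_{T-\varepsilon}^T g_s\,ds\le C(2\varepsilon+R\varepsilon+R^{1+\alpha}\varepsilon^{(1-\alpha)/2})=:\eta(\varepsilon)\to0$ as $\varepsilon\to0$. The a priori estimate of Lemma~\ref{Quad bsde} together with $|\xi|\le C$ gives $\|Y\|_{\infty,[T-\varepsilon,T]}\le e^{C\varepsilon}(C+\eta(\varepsilon))$. Applying Itô's formula to $e^{2\gamma Y_s}$, whose drift absorbs the term $\tfrac{\gamma}{2}|Z_s|^2$ of $\tilde f$, and taking conditional expectations, one obtains for every stopping time $\tau\in[T-\varepsilon,T]$
\[
\mathbb{E}_\tau\Bigl[\int_\tau^T|Z_s|^2\,ds\Bigr]\le\frac{e^{4\gamma\|Y\|_\infty}}{\gamma^2}\bigl(1+2\gamma\eta(\varepsilon)+2\gamma C\varepsilon\|Y\|_\infty\bigr),
\]
hence a bound on $\|Z\cdot W\|_{BMO_2,[T-\varepsilon,T]}$ that converges, as $\varepsilon\to0$, to a constant depending only on $C$ and $\gamma$. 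Choosing $R$ strictly larger than the sum of the two limiting bounds (for $\|Y\|_\infty$ and for $\|Z\cdot W\|_{BMO_2}$) and then $\varepsilon$ small enough, we get $\|(Y,Z)\|\le R$, i.e. $\Phi(\mathbb{B}_\varepsilon)\subset\mathbb{B}_\varepsilon$.

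\emph{Contraction.} For $(U^i,V^i)\in\mathbb{B}_\varepsilon$ put $(Y^i,Z^i)=\Phi(U^i,V^i)$ and $\delta Y=Y^1-Y^2$, $\delta Z=Z^1-Z^2$, $\delta U=U^1-U^2$, $\delta V=V^1-V^2$. Inserting intermediate values and invoking $(\mathscr{A}1)$, the difference of the generators driving $\delta Y$ can be written $a_s\delta Y_s+b_s\cdot\delta Z_s+c_s$, with $|a_s|\le C$, $|b_s|\le C(1+|Z^1_s|+|Z^2_s|)$ (so $b\cdot W\in BMO_2(\mathbb{P})$ with a norm $K$ bounded in terms of $R$ only), and $|c_s|\le C|\mathbb{E}[\delta U_s]|+C(1+|\mathbb{E}[V^1_s]|^\alpha+|\mathbb{E}[V^2_s]|^\alpha)|\mathbb{E}[\delta V_s]|$, a deterministic bound. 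After a Girsanov change of measure $d\widetilde{\mathbb{P}}$ associated with $b\cdot W$ the $b$-term disappears, and removing the bounded drift $a$ by an exponential weight gives $|\delta Y_t|\le e^{C\varepsilon}\widetilde{\mathbb{E}}_t[\int_t^T|c_s|\,ds]=e^{C\varepsilon}\int_t^T|c_s|\,ds$ (deterministic integrand); Hölder's inequality (again $\alpha<1$) bounds $\int_{T-\varepsilon}^T|c_s|\,ds$ by $\tilde c(\varepsilon)\bigl(\|\delta U\|_\infty+\|\delta V\cdot W\|_{BMO_2}\bigr)$ with $\tilde c(\varepsilon)\to0$. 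For $\delta Z$, Itô's formula applied to $|\delta Y|^2$ under $\widetilde{\mathbb{P}}$ and Young's inequality give $\mathbb{E}^{\widetilde{\mathbb{P}}}_\tau[\int_\tau^T|\delta Z_s|^2\,ds]$ bounded by a constant times $\|\delta Y\|_\infty\int_{T-\varepsilon}^T|c_s|\,ds$; the change-of-measure lemma for $BMO$ martingales recalled above (the refinement of \cite{Kazamaki}), whose constants depend only on $K$ and hence only on $R$, transfers this to $\|\delta Z\cdot W\|_{BMO_2(\mathbb{P}),[T-\varepsilon,T]}\le\hat c(\varepsilon)\bigl(\|\delta U\|_\infty+\|\delta V\cdot W\|_{BMO_2}\bigr)$ with $\hat c(\varepsilon)\to0$. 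Therefore $\|\Phi(U^1,V^1)-\Phi(U^2,V^2)\|\le\bigl(e^{C\varepsilon}\tilde c(\varepsilon)+\hat c(\varepsilon)\bigr)\|(U^1,V^1)-(U^2,V^2)\|$, which is a strict contraction for $\varepsilon$ small; the Banach fixed point theorem then produces the unique $(Y,Z)\in\mathbb{B}_\varepsilon$, which is the desired local solution.

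The main obstacle is the contraction step. Because $f$ is only quadratic (not Lipschitz) in $z$, the coefficient $b_s$ produced by linearizing the $z$-differences is merely a $BMO$ process, not bounded, so a naive Gronwall estimate does not close; the Girsanov transform removes it, but then one must compare the $BMO_2$ norms under $\mathbb{P}$ and $\widetilde{\mathbb{P}}$ \emph{with constants uniform over the whole ball $\mathbb{B}_\varepsilon$} — which is exactly what the refinement of Kazamaki's theorem recalled above supplies (this is why its constants are required to depend only on $K$). A secondary, purely technical point is the accounting of the powers of $\varepsilon$: the restriction $\alpha\in[0,1)$ enters through Hölder's inequality to guarantee that the constants multiplying the $\mathbb{E}[\bar z]$–contributions (and the term $|\bar z|^{1+\alpha}$ in the growth of $f$) tend to $0$ as $\varepsilon\to0$.
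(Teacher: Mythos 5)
Your proof is correct and follows the same overall strategy as the paper: freeze the two mean terms, solve the resulting quadratic BSDE via Lemma~\ref{Quad bsde}, and run a Banach fixed-point argument on a short horizon $[T-\varepsilon,T]$, closing the contraction estimate by a Girsanov transform combined with the uniform BMO-norm equivalence~\eqref{uniform BMO estimate}, whose constants depend only on the radius of the ball. The one place where you genuinely diverge is the invariance step. The paper works on the more elaborate set $\mathscr{B}_\varepsilon$, whose $Z$-radius $A$ is a root of a quadratic algebraic equation and whose $Y$-constraint is exponential; this structure is forced because the paper splits $C|\phi'(Y_s)|\,|\mathbb E[V_s]|^{1+\alpha}\le\frac14|\mathbb E[V_s]|^2+\beta|\phi'(Y_s)|^{2/(1-\alpha)}$ by Young's inequality, so the input $V$ contributes a term $\frac14\|V\cdot W\|^2_{BMO_2}$ to the output $Z$-bound that does \emph{not} vanish as $\varepsilon\to0$ and must be absorbed by solving for $A$, while the resulting exponential moment of $\int|V_s|^2\,ds$ is controlled by the John--Nirenberg inequality. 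You instead keep a plain ball of radius $R$ and apply H\"older in time, $\int_{T-\varepsilon}^T|\mathbb E[V_s]|^{1+\alpha}\,ds\le R^{1+\alpha}\varepsilon^{(1-\alpha)/2}$, so that the entire contribution of the frozen mean terms is $o(1)$ as $\varepsilon\to0$ for fixed $R$; one then fixes $R$ above the $\varepsilon\to0$ limits of the $Y$- and $Z$-bounds and shrinks $\varepsilon$, with no circularity. This exploits $\alpha<1$ exactly where the paper does, just through H\"older rather than Young, and it dispenses with both the John--Nirenberg inequality and the quadratic equation for $A$ --- a real simplification. Two minor imprecisions, neither fatal: the process $c_s$ in your linearization is random (it still involves $f(\cdot,Y^2_s,\cdot,Z^2_s,\cdot)$), only its majorant is deterministic, so your displayed identity $\widetilde{\mathbb E}_t\bigl[\int_t^T|c_s|\,ds\bigr]=\int_t^T|c_s|\,ds$ should be an inequality against that deterministic bound; and since $Y$ may change sign, the It\^o computation for the $Z$-estimate is more safely run with $\phi(y)=\gamma^{-2}(e^{\gamma|y|}-\gamma|y|-1)$ as in the paper rather than with $e^{2\gamma Y}$, though the resulting constants are of the same form.
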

 
\begin{exam} The condition on $f$ means that $f$ is of linear growth with respect to $(y,\bar{y})$,
and of $|\bar{z}|^{1+\alpha}$ growth. For example, 
For $\alpha\in (0,1)$,
$$f(s,y,\bar{y},z,\bar{z})=1+|y|+|\bar{y}|+\frac{1}{2}|z|^2+|\bar{z}|^{1+\alpha}.$$
\end{exam}

\medskip

The second theorem is a result about global solutions.
\begin{thm}\label{thm 2} Let assumption $(\mathscr{A}2)$ be satisfied. Moreover, assume that there is a positive constant $C$ such that
the function $f: \Omega\times [0,T]\times \mathbb R^2\times (\mathbb R^d)^2\to \mathbb R$  has the following linear-quadratic growth and globally-locally Lipschitz continuity: for $\forall (\omega, s,y_i,\bar y_i, z_i, \bar z_i)\in \Omega\times [0,T]\times \mathbb R^2\times (\mathbb R^d)^2$ with $i=1,2$,
\begin{equation}\label{xxx}
\begin{split}
&|f(s,0,0,0,0)+h(\omega, s,y,\bar y,z, \bar z)|\le\  C(1+|y|+|\bar y|), \\
&|f(\omega,s,0,0,z_1,0)-f(\omega,s,0,0,z_2,0)|\le \ C(1+|z_1|+|z_2|)|z_1-z_2|, \\
&|h(s,y_1,\bar y_1,z_1, \bar z_1)-h(s,y_2,\bar y_2,z_2, \bar z_2)|
 \le\  C\left(|y_1-y_2|+|\bar y_1-\bar y_2|+|z_1-z_2|+|\bar z_1-\bar z_2|\right)
\end{split}
\end{equation}
where for $(\omega, s,y,\bar y, z, \bar z)\in \Omega\times [0,T]\times \mathbb R^2\times (\mathbb R^d)^2,$
\begin{equation}
    h(s, y, \bar y, z, \bar z):= f(s,y, \bar y, z, \bar z)-f(s,0,0,z,0). 
\end{equation}
The process $f(\cdot,y,\bar y,z,\bar z)$ is $\mathscr{F}_t$-adapted for each $(y,\bar y, z, \bar z)\in \mathbb  R^2\times (\mathbb  R^d)^2$.

Then, the following BSDE
\begin{eqnarray}
    \begin{split}
Y_t=\, & \xi+\int_t^T f(s,Y_s,\mathbb E[Y_s],Z_s, \mathbb E[Z_s])\, ds-\int_t^T Z_s\cdot dW_s, \quad t\in [0,T]
\end{split}
\end{eqnarray}
has a unique adapted solution $(Y, Z)$ on $[0, T]$ such that $Y$ is bounded. Furthermore,  $Z\cdot W$ is a $BMO(\mathbb P)$ martingale.
\end{thm}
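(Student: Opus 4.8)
The plan is to reduce the mean-field equation to a classical quadratic BSDE by a fixed point argument on the pair of \emph{deterministic} processes $(p_\cdot,q_\cdot):=(\mathbb E[Y_\cdot],\mathbb E[Z_\cdot])$. For fixed $p\in L^\infty(0,T)$ and $q\in L^2(0,T;\mathbb R^d)$ I would solve
\[
Y^{p,q}_t=\xi+\int_t^T f(s,Y^{p,q}_s,p_s,Z^{p,q}_s,q_s)\,ds-\int_t^T Z^{p,q}_s\cdot dW_s,
\]
which is a classical one-dimensional quadratic BSDE: writing $f(s,y,p_s,z,q_s)=f(s,0,0,z,0)+h(s,y,p_s,z,q_s)$ and using the three inequalities in (\ref{xxx}) (together with $|f(s,0,0,0,0)|\le C$, hence $|h(s,\cdot)|\le C(2+|y|+|p_s|)$), the generator $(y,z)\mapsto f(s,y,p_s,z,q_s)$ has linear-quadratic growth with coefficients controlled by $C$ and $|p_s|$, is $C$-Lipschitz in $y$, and is locally Lipschitz in $z$ with constant $2C(1+|z_1|+|z_2|)$. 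Lemma \ref{Quad bsde} then gives a unique bounded $Y^{p,q}$ with $Z^{p,q}\cdot W\in BMO_2(\mathbb P)$, so that $\mathbb E[Z^{p,q}_\cdot]\in L^2$. A fixed point of $\Gamma(p,q):=(\mathbb E[Y^{p,q}_\cdot],\mathbb E[Z^{p,q}_\cdot])$ produces a solution of the mean-field BSDE, and conversely any bounded solution — which automatically has $Z\cdot W\in BMO$, by the usual argument applied to $\Phi(|Y|)$ with $\Phi(x)=\tfrac1{(3C)^2}(e^{3Cx}-3Cx-1)$ — yields such a fixed point.

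The crucial a priori input is a \emph{global} bound on $\|Y\|_\infty$ that does not deteriorate with $T$. The point is that in the exponential estimate of Lemma \ref{Quad bsde} the mean-field term enters the exponent only through the \emph{deterministic} function $s\mapsto C|\mathbb E[Y_s]|$, which therefore survives the conditional expectation: any bounded solution satisfies $|Y_t|\le e^{C(T-t)}C+\int_t^T C(3+|\mathbb E[Y_s]|)e^{C(s-t)}\,ds$ a.s., and taking expectations and running the backward Gronwall inequality on $t\mapsto\mathbb E|Y_t|$ bounds it — hence $\|Y\|_\infty$ — by a constant $Y^\ast$ depending only on $C$ and $T$; consequently $\|Z\cdot W\|_{BMO_2(\mathbb P)}\le\Lambda^\ast(C,T)$ as well. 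Running the same computations for $(Y^{p,q},Z^{p,q})$ over a subinterval of length $\delta$ shows that, once $\|p\|_\infty,\|q\|_{L^2}$ are a priori constrained by constants fixed in terms of $C,T$, $\Gamma$ maps a suitable ball of such $(p,q)$ into itself for $\delta$ small.

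For the contraction I would compare two pairs $(Y^i,Z^i)=(Y^{p^i,q^i},Z^{p^i,q^i})$ and linearize: the difference of generators equals $\beta_s\cdot\delta Z_s+a_s\,\delta Y_s+\bar a_s\,\delta p_s+b_s\cdot\delta Z_s+\bar b_s\cdot\delta q_s$ with $|\beta_s|\le C(1+|Z^1_s|+|Z^2_s|)$ and $|a_s|,|\bar a_s|,|b_s|,|\bar b_s|\le C$. Setting $\tilde\beta=\beta+b$, the process $\tilde\beta\cdot W$ is $BMO_2(\mathbb P)$ with norm bounded in terms of $C$ and the a priori $BMO$ bound, so $d\widetilde{\mathbb P}:=\mathscr E(\tilde\beta\cdot W)_0^T\,d\mathbb P$ is an equivalent probability and $\widetilde W:=W-\int_0^\cdot\tilde\beta_s\,ds$ a $\widetilde{\mathbb P}$-Brownian motion; moreover, by estimate (\ref{uniform BMO estimate}) applied with $N=\tilde\beta\cdot W$ and $M=\delta Z\cdot W$, the $BMO_2$ norms of $\delta Z\cdot W$ under $\mathbb P$ and of $\delta Z\cdot\widetilde W$ under $\widetilde{\mathbb P}$ are comparable, with constants depending only on that bound. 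Under $\widetilde{\mathbb P}$ the equation for $\delta Y$ is the linear BSDE $\delta Y_t=\int_t^T(a_s\delta Y_s+\bar a_s\delta p_s+\bar b_s\cdot\delta q_s)\,ds-\int_t^T\delta Z_s\cdot d\widetilde W_s$ with $\delta Y_T=0$; since $\delta p,\delta q$ are deterministic, its explicit solution gives the pointwise bound $|\delta Y_t|\le Ce^{CT}\int_t^T(|\delta p_s|+|\delta q_s|)\,ds$, whence $\|\mathbb E[\delta Y_\cdot]\|_\infty$ is controlled by $\|\delta p\|_{L^1}+\|\delta q\|_{L^1}$. For $\delta Z$ I would apply It\^o's formula to $|\delta Y_t|^2$ under $\mathbb P$, absorb the $\tilde\beta\cdot\delta Z$ term using the $BMO_2$ bound on $\tilde\beta\cdot W$ and Young's inequality (alternatively, via John--Nirenberg / reverse-H\"older moment estimates for $BMO$ martingales), and obtain $\mathbb E\!\int_0^T|\delta Z_s|^2\,ds\le K\big(\int_0^T(|\delta p_s|+|\delta q_s|)\,ds\big)^2$ with $K$ fixed in terms of $C,T$. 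Combining, on a subinterval of length $\delta$ one gets $\|\mathbb E[\delta Y_\cdot]\|_\infty^2+\|\mathbb E[\delta Z_\cdot]\|_{L^2}^2\le C_0\,\delta\,(\|\delta p\|_{L^2}^2+\|\delta q\|_{L^2}^2)$; so $\Gamma$ is a contraction once $\delta<\delta_0(C,T)$, and the contraction mapping principle gives a unique local solution.

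Finally, to pass from local to global I would observe that $Y^\ast$, $\Lambda^\ast$ and the threshold $\delta_0$ depend only on $C$ and $T$, and — this is the point that makes the argument close — the Gronwall-on-means estimate controls \emph{all} the terminal data uniformly, so these constants do not degrade as one concatenates intervals. Thus $[0,T]$ is partitioned into finitely many subintervals of length $\le\delta_0$, the mean-field BSDE is solved on them from $T$ backwards (the terminal value on each being the solution at its right endpoint), and the pieces are pasted; the global a priori estimates then give $\|Y\|_\infty\le Y^\ast$ and $Z\cdot W\in BMO(\mathbb P)$, and uniqueness among bounded solutions follows interval by interval. I expect the main obstacle to be precisely the uniform control of the feedback of $\mathbb E[Z]$: because $f$ grows quadratically in $z$, the linearization forces a Girsanov change of measure with a merely $BMO$ (not bounded) drift, and verifying that the ensuing bounds on $\delta Z$ — hence on $\mathbb E[\delta Z_\cdot]$, which is what re-enters the equation — are uniform is exactly where estimate (\ref{uniform BMO estimate}) and the moment estimates for $BMO$ martingales are needed; it is also the reason the contraction is only local in time and the global statement has to be obtained by concatenation.
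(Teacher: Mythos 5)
Your proposal is correct in outline and rests on the same pillars as the paper's argument --- the decomposition $f=f(s,0,0,z,0)+h$, Girsanov with the BMO drift coming from the locally Lipschitz $z$-part, Lemma~\ref{Quad bsde} for the frozen equation, estimate~\eqref{uniform BMO estimate} to transfer $BMO$ norms between $\mathbb P$ and $\widetilde{\mathbb P}$, and a local-in-time contraction concatenated over finitely many intervals whose length depends only on $(C,T)$ --- but it packages them differently in two respects. First, you contract on the \emph{deterministic} pair $(p,q)=(\mathbb E[Y],\mathbb E[Z])\in L^\infty\times L^2$, whereas the paper contracts (in Theorem~\ref{thm 1}, which Theorem~\ref{thm 2} invokes) on the stochastic pair $(U,V)\in\mathcal S^\infty\times BMO_2(\mathbb P)$; your choice exploits the fact that the coupling is only through means, shrinks the fixed-point space, and makes the linear equation for $\delta Y$ explicitly solvable since its source terms $\bar a_s\delta p_s+\bar b_s\cdot\delta q_s$ are bounded by a deterministic function --- a genuine simplification of the difference estimates. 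Second, your global a priori bound is obtained directly by feeding the exponential estimate of Lemma~\ref{Quad bsde} back into itself and running Gronwall on $t\mapsto\|Y_t\|_\infty$ (which closes precisely because $h$ is bounded in $(z,\bar z)$, so $\mathbb E[Z]$ never enters the bound), whereas the paper propagates $|Y_t^j|^2\le\alpha(t)$ through the Picard iterates by induction against the ODE majorant $\alpha$; the two devices serve the same purpose --- guaranteeing that the terminal data at each concatenation step stay in a ball fixed by $(C,T)$ so that the step length $\eta_\lambda$ does not degenerate --- and yours is arguably more direct, while the paper's has the advantage of reusing Theorem~\ref{thm 1} verbatim. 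The only point to tidy up is the norm bookkeeping in your final contraction display, which bounds $\|\mathbb E[\delta Y]\|_\infty^2+\|\mathbb E[\delta Z]\|_{L^2}^2$ by $C_0\delta(\|\delta p\|_{L^2}^2+\|\delta q\|_{L^2}^2)$: on an interval of length $\delta$ one has $\|\delta p\|_{L^2}\le\sqrt\delta\,\|\delta p\|_\infty$, so the estimate does yield a contraction for the product norm $\|p\|_\infty+\|q\|_{L^2}$ once $\delta$ is small, but the inequality should be stated in that single consistent norm.
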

\begin{exam} The inequality (\ref{xxx}) requires that $f$ is bounded with respect to the last variable $\bar{z}$. The following function 
$$f(s,y,\bar{y},z,\bar{z})=1+s+|y|+|\bar{y}|+\frac{1}{2}|z|^2+|\sin(\bar{z})|, \quad (s,y,\bar{y},z,\bar{z})\in [0,T]\times \mathbb R^2\times (\mathbb R^d)^2,$$
satisfies such an inequality.
\end{exam}

\subsection{Local solution: the proof of Theorem~\ref{thm 1}}

We prove Theorem~\ref{thm 1} (using the contraction mapping principle) in the following three subsections:
in Subsection~\ref{first step}, we construct a map (which we call quadratic solution map) in a Banach space; in Subsection~\ref{second step}, we show that this map is stable in a small ball; and
in Subsection~\ref{third step}, we prove that this map is a contraction.

\subsubsection{Construction of the map}\label{first step}

For a pair of bounded adapted process $U$ and BMO martingale $V\cdot W$, we consider the following quadratic BSDE:
\begin{equation}\label{qbsde2}
Y_t=\xi+\int_t^T f(s,Y_s, \mathbb E[U_s], Z_s, \mathbb E[V_s])\, ds -\int_t^TZ_s\cdot dW_s, \ t\in [0, T].
\end{equation}
As
$$|f(s,y,\mathbb E[U_s],Z_s,\mathbb E[V_s])|\le C(2+|\mathbb E[U_s]|+|\mathbb E[V_s]|^{1+\alpha})+C|y|+\frac{1}{2}{\gamma}|z|^2,
$$
in view of Lemma~\ref{Quad bsde}, it has a unique adapted solution $(Y,Z)$ such that $Y$ is bounded and $Z\cdot W$ is a BMO martingale. Define the quadratic solution map $\Gamma: (U,V)\mapsto \Gamma (U,V)$ as follows:
$$\Gamma(U,V):=(Y,Z), \quad \forall (U, V\cdot W)\in {\cal S}^\infty\times BMO_2(\mathbb P).$$
It is a transformation in the Banach space ${\cal S}^\infty\times BMO_2(\mathbb P)$.

Let us introduce here some constants and a quadratic (algebraic) equation which will be used in the next two subsections.

Define
\begin{equation}\label{C delta}
     C_\delta:=e^{{6\over 1-\alpha}\gamma CTe^{CT}+ {1-\alpha\over 2} \left({3\over 1-\alpha}\gamma Ce^{CT}\right)^{\frac{2}{1-\alpha}}\left({1+\alpha\over 2\delta}\right)^{\frac{1+\alpha}{1-\alpha}}T};
\end{equation}
\begin{equation}\label{beta}
    \beta:={1\over 2}(1-\alpha)C^{2\over 1-\alpha}(2(1+\alpha))^{1+\alpha\over 1-\alpha};
\end{equation}
\begin{eqnarray}\label{mu1,2}
    \begin{split}
\label{lambda}
    \mu_1:=&\ (1-\alpha)\left(1+{1-\alpha\over (1+\alpha)\gamma }\right)= 1-\alpha+{(1-\alpha)^2\over  (1+\alpha)\gamma} ;\\
     \mu_2:=&\ {1\over2}(1+\alpha)\left(1+{1-\alpha\over  (1+\alpha)\gamma }\right)= {1\over2}(1+\alpha)+{1-\alpha\over 2\gamma};
\end{split}
\end{eqnarray}
\begin{equation}\label{mu}
\mu:= \left(\beta+C\mu_1\right)\gamma^{2\over \alpha-1}+2C\mu_2.
\end{equation}
Consider the following standard quadratic equation of $A$:
$$
\delta A^2-\left(1+4\gamma^{-2}e^{\gamma |\xi|_\infty} \delta\right)A +4\gamma^{-2}e^{\gamma |\xi|_\infty} +4\mu C_\delta e^{{3e^{CT}\over 1-\alpha}\gamma |\xi|_\infty} \varepsilon=0.
$$
The discriminant of the quadratic equation reads
\begin{eqnarray}
    \begin{split}
\Delta:=&\left(1+4\gamma^{-2}e^{\gamma |\xi|_\infty} \delta\right)^2-4\delta \left[ 4\gamma^{-2}e^{\gamma |\xi|_\infty} +4\mu C_\delta e^{{3e^{CT}\over 1-\alpha}\gamma |\xi|_\infty} \varepsilon\right]\\
=&\left(1-4\gamma^{-2}e^{\gamma |\xi|_\infty} \delta\right)^2-16\mu \delta C_\delta e^{{3e^{CT}\over 1-\alpha}\gamma |\xi|_\infty} \varepsilon.
\end{split}
\end{eqnarray}

Take
\begin{eqnarray}\label{A}
    \begin{split}
\delta:= &{1\over 8}\gamma^2 e^{-\gamma |\xi|_\infty}, \quad \varepsilon\le  \min\left\{{1\over 3C}e^{-CT}, { 1\over 8\mu C_\delta}\gamma^{-2} e^{\gamma(1-{3e^{CT}\over 1-\alpha}) |\xi|_\infty}\right\},\\
 A:=& {{1+4\gamma^{-2}e^{\gamma |\xi|_\infty} \delta-\sqrt{\Delta}}\over 2 \delta}={3-2\sqrt{\Delta}\over 4\delta }\le {3\over 4\delta}=6\gamma^{-2}e^{\gamma |\xi|_\infty},
\end{split}
\end{eqnarray}
and we have
\begin{eqnarray}\label{root}
    \begin{split}
&\Delta \ge 0, \quad 1-\delta A={1+2\sqrt{\Delta}\over 4}>0, \\
& \gamma^{-2}e^{\gamma |\xi|_\infty}+\mu C_\delta{e^{{3\gamma e^{CT}\over 1-\alpha} |\xi|_\infty}\over 1-\delta A}\varepsilon +{1\over4}A= {1\over2} A.
\end{split}
\end{eqnarray}

Throughout this section, we base our discussion on the time interval $[T-\varepsilon, T].$

We shall prove Theorem~\ref{thm 1} by showing that the quadratic solution map $\Gamma$ is a contraction on the closed convex set $\mathscr{B}_{\varepsilon}$ defined by
\begin{eqnarray}
  \mathscr{B}_{\varepsilon}:=\biggl\{ (U,V):&  U\in {\cal S}^\infty, \quad  V\cdot W\in  BMO_2(\mathbb P), \nonumber \\
   &\|V\cdot W\|^2_{BMO_2}\le A,  \quad
    e^{{2\over 1-\alpha}\gamma |U|_\infty}\le {C_\delta e^{{3\gamma e^{CT}\over 1-\alpha} |\xi|_\infty}\over 1-\delta A}\biggr\}
\end{eqnarray}
(where $(U, V)$ is defined on $\Omega\times [T-\varepsilon, T]$ ) for a positive constant  $\varepsilon$ (to be determined later).

\subsubsection{Estimation of the quadratic solution map} \label{second step}

We shall show the following assertion: $\Gamma(\mathscr{B}_{\varepsilon})\subset \mathscr{B}_{\varepsilon}$, that is,
\begin{equation}\label{desired}
\Gamma(U,V)\in \mathscr{B}_{\varepsilon}, \quad\quad \forall\ (U,V)\in \mathscr{B}_{\varepsilon}.
\end{equation}

{\it Step 1. Exponential transformation.}

\medskip
Define
\begin{equation}
\phi(y):=\gamma^{-2}[\exp{(\gamma |y|)}-\gamma |y|-1], \quad y\in R.
\end{equation}
Then, we have for $y\in R, $
\begin{equation}\label{phi}
\phi'(y)=\gamma^{-1}[\exp{(\gamma |y|)}-1]\mbox{sgn}(y), \quad \phi''(y)=\exp{(\gamma |y|)}, \quad \phi''(y)-\gamma |\phi'(y)|=1.
\end{equation}

Using It\^o's formula, we have for $t\in [T-\varepsilon, T],$
\begin{eqnarray}\label{quadratic estimate}
    \begin{split}
      &\phi(Y_t)+{1\over2} \mathbb E_t\left[\int_t^T|Z_s|^2\, ds\right] \\
      \le\, & \phi(|\xi|_\infty)+ C\mathbb E_t\left[\int_t^T|\phi'(Y_s)|\left(2+|Y_s|+|\mathbb E[U_s]|+|\mathbb E[V_s]|^{1+\alpha}\right)\, ds\right].
    \end{split}
\end{eqnarray}
Since (in view of the definition of notation $\beta$ in~\eqref{beta})
$$
C|\phi'(Y_s)| |\mathbb E[V_s]|^{1+\alpha}\le {1\over4}|\mathbb E[V_s]|^2+\beta |\phi'(Y_s)|^{2\over 1-\alpha},
$$
we have
\begin{eqnarray}\label{Young}
    \begin{split}
      &\phi(Y_t)+{1\over2} \mathbb E_t\left[\int_t^T|Z_s|^2\, ds\right] \\
        \le\, & \phi(|\xi |_\infty)+C \mathbb E_t\left[\int_t^T|\phi'(Y_s)|\left(2+|Y_s|+|\mathbb E[U_s]|\right)\, ds\right]\\
                &+\beta \mathbb E_t\left[\int_t^T|\phi'(Y_s)|^{{2\over 1-\alpha}}\, ds\right]+{1\over4}E_t\left[\int_t^T|E[V_s]|^2\, ds\right]\\
                 \le\, & \phi(|\xi|_\infty)+C E_t\left[\int_t^T|\phi'(Y_s)|\left((1+|Y_s|)+(1+|\mathbb E[U_s]|)\right)\, ds\right]\\
        &+\beta \mathbb E_t\left[\int_t^T|\phi'(Y_s)|^{{2\over 1-\alpha}}\, ds\right]+{1\over4}\mathbb E_t\left[\int_t^T|\mathbb E[V_s]|^2\, ds\right].
   \end{split}
\end{eqnarray}
In view of the inequality for $x>0,$
$$
1+x\le \left(1+{1-\alpha\over \gamma(1+\alpha)}\right)e^{{\gamma(1+\alpha)\over 1-\alpha}x},
$$
we have
\begin{eqnarray}
    \begin{split}
&C \mathbb E_t\left[\int_t^T|\phi'(Y_s)|\left((1+|Y_s|)+(1+|\mathbb E[U_s]|)\right)\, ds\right] \\
\le & C \mathbb E_t\left[\int_t^T|\phi'(Y_s)|\left(1+{1-\alpha\over \gamma (1+\alpha)}\right)\left(e^{\gamma {1+\alpha\over 1-\alpha}|Y_s|}+e^{\gamma {1+\alpha\over 1-\alpha}|\mathbb E[U_s]|}\right)\, ds\right].
\end{split}
\end{eqnarray}
Since (by Young's inequality)
$$
|\phi'(Y_s)|\left(e^{\gamma {1+\alpha\over 1-\alpha}|Y_s|}+e^{\gamma {1+\alpha\over 1-\alpha}|E[U_s]|}\right)\le (1-\alpha)|\phi'(Y_s)|^{2\over 1-\alpha}+{1+\alpha\over2}\left(e^{{2\over 1-\alpha}\gamma|Y_s|}+e^{{2 \over 1-\alpha}\gamma|\mathbb E[U_s]|}\right),
$$
in view of the definition of the notations $\mu_1$ and $\mu_2$ in~\eqref{mu1,2}, we have
\begin{eqnarray}
    \begin{split}
&C E_t\left[\int_t^T|\phi'(Y_s)|\left((1+Y_s)+(1+|E[U_s]|)\right)\, ds\right] \\
\le &\  C\mu_1 E_t\left[\int_t^T|\phi'(Y_s)|^{2\over 1-\alpha}\, ds\right]
+ C\mu_2 E_t\left[\int_t^T\left(e^{{2\gamma \over 1-\alpha}|Y_s|}+e^{{2\gamma \over 1-\alpha}|E[U_s]|}\right)\, ds\right].
  \end{split}
\end{eqnarray}

In view of inequality~\eqref{Young}, we have
\begin{eqnarray}\label{fundamental}
    \begin{split}
    &\phi(Y_t)+{1\over2} E_t\left[\int_t^T|Z_s|^2\, ds\right] \\
             \le\, & \phi(|\xi|_\infty)+\left(\beta+C\mu_1\right) E_t\left[\int_t^T|\phi'(Y_s)|^{{2\over 1-\alpha}}\, ds\right]\\
        &+C\mu_2 E_t\left[\int_t^T\left(e^{{2\gamma \over 1-\alpha}|Y_s|}+e^{{2\gamma \over 1-\alpha}|E[U_s]|}\right)\, ds\right]+{1\over4}\int_t^T|E[V_s]|^2\, ds\\
  \le\, & \phi(|\xi|_\infty)+\left[C\mu_2+\gamma^{2\over \alpha-1}\left(\beta+C\mu_1\right)\right] E_t\left[\int_t^T e^{{2\gamma \over 1-\alpha}|Y_s|}\, ds\right]\\
        &+C\mu_2 E_t\left[\int_t^Te^{{2\gamma \over 1-\alpha}|U_s|}\, ds\right]+{1\over4}E\left[\int_t^T|V_s|^2\, ds\right].
    \end{split}
\end{eqnarray}

{\it Step 2. Estimate of $e^{\gamma |Y|_\infty}$.}

\medskip
In view of the last inequality of Lemma~\ref{Quad bsde},  we have
\begin{equation}
   e^{{3\over 1-\alpha}\gamma |Y_t|}\le E_t\left[e^{{3\over 1-\alpha}\gamma e^{CT} \left(|\xi|_\infty+C\int_t^T(2+|E[U_s]|+|E[V_s]|^{1+\alpha})\, ds\right)}\right].
\end{equation}
Since (by Young's inequality)
\begin{equation}
    {3e^{CT}\over 1-\alpha}\gamma C|E[V_s]|^{1+\alpha}
 \le   {1-\alpha\over 2} \left({3e^{CT}\over 1-\alpha}\gamma C \left({1+\alpha\over 2\delta}\right)^{\frac{1+\alpha}{2}}\right)^{2\over 1-\alpha}+\delta |E[V_s]|^2,
\end{equation}
in view of the definition of notation $C_\delta$ in~\eqref{C delta}, we have
\begin{eqnarray}
    \begin{split}
e^{{3\over 1-\alpha} \gamma|Y_t|}\le\, & C_\delta \left[e^{\left({3\over 1-\alpha}\gamma e^{CT}|\xi|_\infty+{3\over 1-\alpha}\gamma Ce^{CT}\varepsilon |U|_\infty +\delta \int_t^T|E[V_s]|^2\, ds\right)}\right];
 \end{split}
\end{eqnarray}
and therefore using Jensen's inequality,
\begin{eqnarray}
    \begin{split}
e^{{3\over 1-\alpha}\gamma |Y_t|}\le\, & C_\delta\ e^{\left({3\over 1-\alpha}\gamma e^{CT}(|\xi|_\infty+ C\varepsilon |U|_\infty)\right)}
E\left[e^{\delta\int_t^T|V_s|^2\, ds}\right].
\end{split}
\end{eqnarray}
It follows from  \eqref{A} and the definition of ${\cal B}_\varepsilon$ that $\|\sqrt{\delta}V\cdot W\|^2_{BMO_2(\mathbb P)}\le \delta A<1$, applying the John-Nirenberg inequality to the BMO martingale $\sqrt{\delta}V\cdot W$, we have
\begin{eqnarray}
    \begin{split}
e^{{3\over 1-\alpha}\gamma |Y_t|}\le\, & {C_\delta \ e^{({3e^{CT}\over 1-\alpha}\gamma |\xi|_\infty+{3e^{CT}\over 1-\alpha}C\gamma \varepsilon |U|_\infty)}\over 1-\delta \|V\cdot W\|^2_{BMO_2}}\le {C_\delta \ e^{({3e^{CT}\over 1-\alpha}\gamma |\xi|_\infty+{3e^{CT}\over 1-\alpha}C\gamma \varepsilon |U|_\infty)}\over 1-\delta A}.
 \end{split}
\end{eqnarray}
Since $3e^{CT}C\varepsilon\le 1$ (see the choice of $\varepsilon$ in~\eqref{A}) and  $(U,V)\in \mathscr{B}_\varepsilon$, we have
\begin{eqnarray}
    \begin{split}
e^{({3\over 1-\alpha}\gamma |Y|_\infty)}
\le\, &  {C_\delta e^{({3e^{CT}\over 1-\alpha}\gamma |\xi|_\infty+{1\over 1-\alpha}\gamma |U|_\infty)}\over 1-\delta A}\\
\le\, & C_\delta { e^{{3e^{CT}\over 1-\alpha}\gamma |\xi|_\infty}\over 1-\delta A}   \left( C_\delta {e^{{3e^{CT}\over 1-\alpha}\gamma |\xi|_\infty}\over 1-\delta A}\right)^{1\over2}\\
\le\, & \left({ C_\delta e^{{3e^{CT}\over 1-\alpha}\gamma |\xi|_\infty}\over 1-\delta A}\right)^{3\over2},
\end{split}
\end{eqnarray}
which gives a half of the desired result~\eqref{desired}.

\medskip
{\it Step 3. Estimate of $\|Z\cdot W\|_{BMO_2}^2$.}

\medskip
From inequality~\eqref{fundamental} and the definition of notation $\mu$ in (\ref{mu}), we have
\begin{eqnarray}\label{A*}
    \begin{split}
    {1\over2} E_t\left[\int_t^T|Z_s|^2\, ds\right]
  \le\, & \gamma^{-2}e^{\gamma |\xi|_\infty}+\mu C_\delta{e^{{3e^{CT}\over 1-\alpha}\gamma |\xi|_\infty}\over 1-\delta A}\varepsilon +{1\over4}A.
\end{split}
\end{eqnarray}
In view of~\eqref{root}, we have
\begin{equation}
   \frac{1}{2} \|Z\cdot W\|_{BMO_2}^2\le \gamma^{-2}e^{\gamma |\xi|_\infty}+\mu C_\delta{e^{{3e^{CT}\over 1-\alpha}\gamma |\xi|_\infty}\over 1-\delta A}\varepsilon +{1\over4}A= {1\over2} A.
\end{equation}
The other half  of the desired result~\eqref{desired} is then proved.

\subsubsection{Contraction of the quadratic solution map}\label{third step}
For $(U,V)\in \mathscr{B}_\varepsilon$ and $(\widetilde U, \widetilde V)\in \mathscr{B}_\varepsilon$, set
$$
(Y,Z):=\Gamma(U,V), \quad (\widetilde Y, \widetilde Z):= \Gamma(\widetilde U, \widetilde V).
$$
That is,
\begin{eqnarray}
    \begin{split}
Y_t=\, & \xi+\int_t^Tf(s,Y_s,E[U_s], Z_s, E[V_s])\, ds -\int_t^TZ_sdW_s,\\
{\widetilde Y}_t=\, & \xi+\int_t^T f(s,{\widetilde Y}_s, E[{\widetilde U}_s], {\widetilde Z}_s, E[{\widetilde V}_s])\, ds -\int_t^T{\widetilde Z}_s dW_s.
\end{split}
\end{eqnarray}
We can define the vector process $\beta$ in an obvious way such that
\begin{eqnarray}
    \begin{split}
&|\beta_s|\le  C(1+|Z_s|+|{\widetilde Z}_s|),\\
&{f(s,Y_s,E[U_s], Z_s, E[V_s])-f(s,Y_s,E[U_s], {\widetilde Z}_s, E[V_s])}=  (Z_s-\widetilde Z_s)\beta_s.
\end{split}
\end{eqnarray}
Then $\widetilde W_t:=W_t-\int_0^t\beta_s\, ds$  is a Brownian motion under the equivalent probability measure ${\widetilde P}$ defined by
$$d{\widetilde P}: =\mathscr{Exp} (\beta\cdot W)_0^T\, dP,$$
 and
from the above-established a priori estimate, there is $K>0$ such that $\|\beta\cdot W\|^2_{BMO_2}\le K^2:=3C^2T+6C^2A$.

In view of the following equation
\begin{eqnarray}
    \begin{split}
&Y_t-{\widetilde Y}_t +\int_t^T(Z_s-{\widetilde Z}_s) \, d{\widetilde W}_s\\
=\, & \int_t^T\left[f(s,Y_s,E[U_s], {\widetilde Z}_s, E[V_s])-f(s,{\widetilde Y}_s,E[{\widetilde U}_s], {\widetilde Z}_s, E[{\widetilde V}_s])\right]\, ds,
\end{split}
\end{eqnarray}
taking square and then the conditional expectation with respect to ${\widetilde P}$ (denoted by ${\widetilde E}_t$) on both sides of the last equation, we have the following standard estimates:
\begin{eqnarray}\label{square}
    \begin{split}
&|Y_t-{\widetilde Y}_t|^2+{\widetilde E}_t\left[\int_t^T|Z_s-{\widetilde Z}_s|^2\, ds\right]\\
= \, & {\widetilde E}_t\left[\left(\int_t^T\left(f(s,Y_s,E[U_s], {\widetilde Z}_s, E[V_s])-f(s,{\widetilde Y}_s,E[{\widetilde U}_s], {\widetilde Z}_s, E[{\widetilde V}_s])\right)\, ds\right)^2\right]\\
\le\, &C^2{\widetilde E}_t\left[\left(\int_t^T\left(|Y_s-{\widetilde Y}_s|+|E[U_s-{\widetilde U}_s]|+(1+|E[V_s]|^\alpha+|E[{\widetilde V}_s]|^\alpha)|E[V_s-{\widetilde V}_s]|\right)\, ds\right)^2\right]\\
\le\, &3C^2(T-t)^2(|U-{\widetilde U}|^2_\infty+|Y-{\widetilde Y}|_\infty^2)\\
&+3C^2 \int_t^T (1+|E[V_s]|^{\alpha}+|E[{\widetilde V}_s]|^{\alpha})^2\, ds\int_t^T|E[V_s-{\widetilde V}_s]|^2\, ds\\
\le\, &3C^2(T-t)^2(|U-{\widetilde U}|^2_\infty+|Y-{\widetilde Y}|_\infty^2)\\
&+9C^2 \int_t^T (1+|E[V_s]|^{2\alpha}+|E[{\widetilde V}_s]|^{2\alpha})\, ds\int_t^T|E[V_s-{\widetilde V}_s]|^2\, ds
\end{split}
\end{eqnarray}
We have for $t\in [T-\varepsilon, T]$,
\begin{eqnarray}
    \begin{split}
&\ \int_t^T (1+|E[V_s]|^{2\alpha}+|E[{\widetilde V}_s]|^{2\alpha})\, ds\\
\le &\ \int_t^T (1+E[|V_s|^2]^{\alpha}+E[|{\widetilde V}_s|^2]^{\alpha})\, ds\\
\le & \varepsilon+\varepsilon^{1-\alpha}E\left[\int_t^T |V_s|^2ds\right]^\alpha+\varepsilon^{1-\alpha}E\left[\int_t^T |{\widetilde V}_s|^2ds\right]^\alpha\\
\le &\ \varepsilon^{1-\alpha} \left(T^\alpha+2+\alpha E\int_t^T |V_s|^2\, ds+\alpha E\int_t^T|{\widetilde V}_s|^2\, ds\right)\\
\le &\ \varepsilon^{1-\alpha} \left(T^\alpha+2+\alpha \|V\cdot W\|^2_{BMO_2(\mathbb P)}+\alpha \|{\widetilde V}\cdot W\|^2_{BMO_2(\mathbb P)}\right)\\
\le &\ \varepsilon^{1-\alpha} (T^\alpha+2+2\alpha A).
\end{split}
\end{eqnarray}
Concluding the above estimates, we have for $t\in [T-\varepsilon, T]$,
\begin{eqnarray}
    \begin{split}
  &|Y_t-{\widetilde Y}_t|^2+{\widetilde E}_t\left[\int_t^T|Z_s-{\widetilde Z}_s|^2\, ds\right]\\
\le\, &3C^2\varepsilon^2(|U-{\widetilde U}|^2_\infty+|Y-{\widetilde Y}|^2_\infty)\\
&+9C^2 \left(T^\alpha+2+2\alpha A\right)\varepsilon^{1-\alpha} \|(V-{\widetilde V})\cdot W\|^2_{BMO_2(\mathbb P)}.
\end{split}
\end{eqnarray}
In view of estimates~\eqref{uniform BMO estimate}, noting that $1-3C^2\varepsilon^2\ge {2 \over 3}$, we have for $t\in [T-\varepsilon, T]$,
\begin{eqnarray}
    \begin{split}
&|Y-{\widetilde Y}|^2_\infty+3c_1^2\|(Z-{\widetilde Z})\cdot W\|^2_{BMO_2(\mathbb P)}\\
\le \, &9C^2\ \varepsilon^2|U-{\widetilde U}|^2_\infty
+27C^2 \left(T^\alpha+2+2\alpha A\right)\varepsilon^{1-\alpha} \|(V-{\widetilde V})\cdot W\|^2_{BMO_2(\mathbb P)}.
\end{split}
\end{eqnarray}

It is then standard to show that there is a very small positive number $\varepsilon$ such that the quadratic solution map $\Gamma$ is a contraction on the previously given set $\mathscr{B}_\varepsilon$, by noting that $A\le 6\gamma^{-2}e^{\gamma |\xi|_\infty}$ from \eqref{A}. The proof is completed by choosing a sufficiently small $r_\varepsilon>0$ such that $\mathbb{B}_\varepsilon\subset \mathscr{B}_\varepsilon$.

\subsection{Global solution: the proof of Theorem~\ref{thm 2}}

Let us first note that there exists a constant $\widetilde C>0$ such that $|\xi|^2\le \widetilde C$ and
$$
 |2x h(s,y,\bar y, z, \bar z)+2x f(s,0,0,0,0)|\le \widetilde C+\widetilde C|x|^2+\widetilde C(|y|^2 +|\bar y|^2)
$$
for any $(x,y,\bar y, z, \bar z)\in R\times R^2\times (R^d)^2$.
Let $\alpha (\cdot)$ be the unique solution of the following ordinary differential equation:
$$
\alpha (t)=\widetilde C+\int_t^T\widetilde C\, ds+\int_t^T(2\widetilde C+\widetilde C)\alpha (s)\, ds, \quad t\in [0,T].
$$

It is easy to see that $\alpha (\cdot)$ is a continuous decreasing function and  we have
$$
\alpha (t)=\widetilde C+\int_t^T\widetilde C[1+2\alpha (s)]\, ds+\widetilde C\int_t^T\alpha (s)\, ds, \quad t\in [0,T].
$$

Define
$$
\lambda :=\sup_{t\in [0,T]}\alpha(t)=\alpha(0).
$$

As $|\xi|^2\le \widetilde C\le \lambda$, Theorem \ref{thm 1} shows that there exists $\eta_\lambda>0$ which only depends on $\lambda$, such that BSDE has a local solution $(Y, Z)$ on $[T-\eta_\lambda, T]$ and it can be constructed through the Picard iteration.

Consider the Picard iteration:
\begin{eqnarray}
Y^0_t&=&\xi+\int_t^TZ_s^0\, dW_s; \quad \mbox{ and for } \quad j\ge 0,\nonumber \\
 Y^{j+1}_t  &=& \xi+\int_t^T[f(s,0,0,Z_s^{j+1},0)-f(s,0,0,0,0)]\, ds\nonumber\\
 && +\int_t^T[f(s,0,0,0,0)+f(s, Y_s^j, E[Y_s^j], Z_s^j, E[Z_s^j])-f(s,0,0,Z_s^j,0)]\, ds\nonumber\\
 &&-\int_t^TZ_s^{j+1}\, dW_s \nonumber \\
   &=& \xi+\int_t^T [f(s,0,0,0,0)+h(s, Y_s^j,E[Y_s^j], Z_s^j, E[Z_s^j])]\, ds\nonumber\\
   && -\int_t^TZ_s^{j+1}\, d\widetilde W_s^{j+1}, \quad t\in [T-\eta_\lambda,T], \nonumber
\end{eqnarray}
where 
$$f(s,0,0,Z_s^{j+1},0)-f(s,0,0,0,0)=Z_s^{j+1}\beta_s^{j+1}$$ 
and the process
$$\widetilde W_t^{j+1}=W_t-\int_0^t \beta_s^{j+1}{\bf 1}_{[T-\eta_\lambda, T]}(s)ds$$ is a Brownian motion under an equivalent probability measure $P^{j+1}$ which we denote by $\widetilde P$ with loss of generality, and under which the expectation is denoted by $\widetilde E$.
Using It\^o's formula, it is straightforward to deduce the following estimate for  $r\in [T-\eta_\lambda,t]$,
\begin{eqnarray}\label{secondY}
   && {\widetilde E}_r[|Y_t^{j+1}|^2]+{\widetilde E}_r [\int_r^T|Z_s^{j+1}|^2\, ds]\nonumber \\
   &=& {\widetilde E}_r [|\xi|^2]+\widetilde C {\widetilde E}_r\int_t^T 2Y_s^{j+1}[f(s,0,0,0,0)+h(s, Y_s^j, E[Y_s^j], Z_s^j, E[Z_s^j])]\, ds\nonumber \\
   &\le& \widetilde C+\widetilde C\int_t^T{\widetilde E}_r[|Y_s^{j+1}|^2]\, ds+\widetilde C\int_t^T\{{\widetilde E}_r [|Y_s^j|^2]+|E[Y_s^j]|^2+1\}\, ds.
\end{eqnarray}

In what follows, we show by induction the following inequality:
\begin{equation}\label{induction}
|Y_t^{j}|^2\le \alpha(t), \quad t\in [T-\eta_\lambda,T].
\end{equation}

In fact, it is trivial to see that $|Y_t^0|^2\le \alpha(t)$, and let us suppose $|Y_t^j|^2\le \alpha(t)$ for $t\in [T-\eta_\lambda,T]$. Then, from \eqref{secondY},
$$
{\widetilde E}_r[|Y_t^{j+1}|^2]\le \widetilde C+\widetilde C\int_t^T[1+2\alpha(s)]\, ds+\widetilde C\int_t^T{\widetilde E}_r [|Y_s^{j+1}|^2]\, ds, \quad t\in [T-\eta_\lambda,T].
$$
From the comparison theorem, we have
$$
{\widetilde E}_r [|Y_t^{j+1}|^2]\le \alpha(t),  \quad t\in [T-\eta_\lambda,T].
$$
Setting $r=t$, we have
$$
|Y_t^{j+1}|^2\le \alpha(t),  \quad t\in [T-\eta_\lambda,T].
$$
Therefore, inequality~\eqref{induction} holds.

As $Y_t=\lim_j Y_t^j$ , our constructed local solution $(Y,Z)$ in $[T-\eta_\lambda,T]$   satisfies then the following estimate:
$$
|Y_t|^2\le \alpha(t), \quad t\in [T-\eta_\lambda, T], \quad {and} \quad |Y_t|^2\le \alpha(t), \quad t\in [T-\eta_\lambda, T]. $$

In particular, $|Y_{T-\eta_\lambda}|^2\le \alpha(T-\eta_\lambda)\le \lambda$.

Taking $T-\eta_\lambda$ as the terminal time and $Y_{T-\eta_\lambda}$ as terminal value, Theorem \ref{thm 1} shows that BSDE has a local solution $(Y, Z)$ on $[T-2\eta_\lambda, T-\eta_\lambda]$ through the Picard iteration. Once again, using the Picard iteration and the fact that $|Y_{T-\eta}|^2\le \alpha(T-\eta)$, we deduce that $|Y_t|^2\le \alpha(t)$, for $t\in [T-2\eta_\lambda, T-\eta_\lambda]$.
Repeating the preceding process, we can extend the pair $(Y, Z)$ to the whole interval $[0, T]$ within a finite steps such that $Y$ is uniformly bounded by $\lambda$. We now show that $Z\cdot W$ is a $BMO(P)$ martingale.

Identical to the proof of inequality~\eqref{quadratic estimate}, we have
 \begin{eqnarray}
    \begin{split}
      &\phi(Y_t)+{1\over2} E_t\left[\int_t^T|Z_s|^2\, ds\right] \\
      \le\, & \phi(|\xi|_\infty)+ CE_t\left[\int_t^T|\phi'(Y_s)|\left(1+|Y_s|+E[|Y_s|]\right)\, ds\right]\\
       \le\, & \phi(|\xi|_\infty)+ C\phi'(\lambda)E_t\left[\int_t^T\left(1+\lambda+\lambda\right)\, ds\right].
    \end{split}
\end{eqnarray}
Consequently, we have
\begin{eqnarray}
    \begin{split}
      & \|Z\cdot W\|^2_{BMO_2(\mathbb P)}=\sup_{\tau}E_\tau\left[\int_\tau^T|Z_s|^2\, ds\right] \\
            \le\, & 2\phi(|\xi|_\infty)+ 4C\phi'(\lambda)(1+\lambda)T.
    \end{split}
\end{eqnarray}

Finally, we prove the uniqueness. Let $(Y,Z)$ and $(\widetilde Y, \widetilde Z)$ be two adapted solutions. Then, we have (recall that $\beta$ is defined by (?))
\begin{eqnarray}
    \begin{split}
Y_t-{\widetilde Y}_t=\, & \int_t^T\left[h(s,Y_s,E[Y_s], \tilde{Z}_s, E[Z_s])-h(s, {\widetilde Y}_s,E[{\widetilde Y}_s], {\widetilde Z}_s, E[{\widetilde Z}_s])\right]\, ds \\
&-\int_t^T(Z_s-{\widetilde Z}_s) \, d{\widetilde W}_s, \quad t\in [0,T].
\end{split}
\end{eqnarray}
Similar to the first two inequalities in \eqref{square}, for any stopping time $\tau$ which takes values in $[T-\varepsilon, T]$, we have
\begin{eqnarray}
    \begin{split}
&|Y_\tau-{\widetilde Y}_\tau|^2+{\widetilde E}_{\tau}\left[\int_\tau^T|Z_s-{\widetilde Z}_s|^2\, ds\right]\\
= \, & {\widetilde E}_{\tau}\left[\left(\int_\tau^T\left[h(s,Y_s,E[Y_s], \tilde{Z}_s, E[Z_s])-h(s, {\widetilde Y}_s,E[{\widetilde Y}_s], {\widetilde Z}_s, E[{\widetilde Z}_s])\right]\, ds\right)^2\right]\\
\le\, &C^2{\widetilde E}_{\tau}\left[\left(\int_\tau^T\left[|Y_s-{\widetilde Y}_s|+|E[Y_s-{\widetilde Y}_s]|+|E[Z_s-{\widetilde Z}_s]|\right]\, ds\right)^2\right]\\
\le \, &6C^2\varepsilon^2|Y-\widetilde Y|_\infty^2+3C^2 \varepsilon {\widetilde E}_{\tau}\left[\int_\tau^T|Z_s-{\widetilde Z}_s|^2\, ds\right]+3C^2 \varepsilon E\left[\int_{T-\varepsilon}^T|Z_s-{\widetilde Z}_s|^2\, ds\right]\\
\le \, &6C^2\varepsilon^2|Y-\widetilde Y|_\infty^2+3C^2 \varepsilon \|(Z-{\widetilde Z})\cdot \widetilde W\|^2_{BMO_2(\widetilde P)}+3C^2 \varepsilon \|(Z-{\widetilde Z})\cdot  W\|^2_{BMO_2(\mathbb P)}\\
\le \, &6C^2\varepsilon^2|Y-\widetilde Y|_\infty^2+3C^2 (1+c_2^2)\varepsilon \|(Z-{\widetilde Z})\cdot  W\|^2_{BMO_2(\mathbb P)}.
\end{split}
\end{eqnarray}
Therefore, we have (on the interval $[T-\varepsilon, T]$)
\begin{eqnarray}
    \begin{split}
&\left|Y-{\widetilde Y}\right|^2_\infty+c_1^2\left\|(Z-{\widetilde Z})\cdot W\right\|^2_{BMO_2(\mathbb P)} \\
\le \, &6C^2\varepsilon^2\left|Y-\widetilde Y\right|_\infty^2+3C^2 (1+c_2^2)\varepsilon \left\|(Z-{\widetilde Z})\cdot  W\right\|^2_{BMO_2(\mathbb P)}.
\end{split}
\end{eqnarray}
Note that since
$$|\beta|\le C(1+|Z|+|\widetilde Z|), $$
the two generic constants $c_1$ and $c_2$ only depend on the sum
$$
\left\|Z\cdot  W\right\|^2_{BMO_2(\mathbb P)}+\left\|{\widetilde Z}\cdot  W\right\|_{BMO_2(\mathbb P)}.
$$

Then when $\varepsilon$ is sufficiently small, we conclude that  $Y=\widetilde Y$ and $Z=\widetilde Z$ on $[T-\varepsilon, T]$. Repeating iteratively with a finite of times, we have the uniqueness
on the given interval $[0, T]$.

\section{The expected term is additive and has a quadratic growth in the second unknown variable}
Let us first consider the following quadratic BSDE with mean term:
\begin{eqnarray}\label{simpleBSDE}
  Y_t &=& \xi+\int_t^T\left[f_1(s,Z_s)+E[f_2(s,Z_s,\mathbb E[Z_s])\right]\, ds-\int_t^TZ_s\, dB_s, \quad t\in [0,T]，
\end{eqnarray}
where $f_1:\Omega\times [0,T]\times \mathbb R^d\times \mathbb R^d\rightarrow \mathbb R$ satisfies:
$f_1(\cdot,z)$ is an adapted process for any $z$, and
\begin{equation}\label{quadraticgenerator1}
|f_1(t,0)|\le C,\quad |f_1(t,z)-f_i(t,z')|\le C(1+|z|+|z'|)|z-z'|)|\bar{z}-\bar{z}'|),
\end{equation}
and $f_2:\Omega\times [0,T]\times \mathbb  R^d\times \mathbb R^d\rightarrow \mathbb R$ satisfies:
 $f_2(\cdot,z,\bar{z})$ is an adapted process for any $z$ and $\bar{z}$, and
\begin{equation}\label{quadraticgenerator2}
|f_2(t,z,\bar{z})|\le C(1+|z|^2+|z'|^2).
\end{equation}

\begin{prop}
Let us suppose that $f_1$ and $f_2$ be two  generators satisfying the above conditions and $\xi$ be a bounded random variable.
 Then (\ref{simpleBSDE}) admits a unique solution $(Y,Z)$ such that $Y$ is bounded and $Z\cdot W$ is a BMO martingale.
\end{prop}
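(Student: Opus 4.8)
The plan is to exploit the elementary but crucial observation that, once a square--integrable process $Z$ is fixed, the quantity
\[
  \psi(s):=\mathbb E\bigl[f_2(s,Z_s,\mathbb E[Z_s])\bigr]
\]
is a \emph{deterministic} function of $s$ alone. Adding $\psi$ to the generator therefore contributes only a deterministic drift, which can be removed by a deterministic shift of the $Y$--component without affecting $Z$ in any way. Since $f_1$ depends neither on $Y$ nor on $\mathbb E[Y]$, this completely decouples the mean term from the rest of the equation, and --- in contrast with the general situation described in the introduction --- no fixed--point argument is needed here.

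\emph{Construction.} First I would solve the standard one--dimensional quadratic BSDE
\[
  \widehat Y_t=\xi+\int_t^T f_1(s,Z_s)\,ds-\int_t^T Z_s\cdot dW_s,\qquad t\in[0,T].
\]
From $|f_1(t,0)|\le C$ together with the local Lipschitz bound \eqref{quadraticgenerator1} one deduces a growth estimate $|f_1(t,z)|\le C'+\tfrac{\gamma}{2}|z|^2$ with $C'$ a constant (hence in $L^1(0,T)$), so that Lemma~\ref{Quad bsde} applies with $\beta=0$ and yields a unique solution $(\widehat Y,Z)$ with $\widehat Y$ bounded and $Z\cdot W\in BMO_2(\mathbb P)$. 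Using this $Z$, set $\psi(s):=\mathbb E[f_2(s,Z_s,\mathbb E[Z_s])]$; by the quadratic growth \eqref{quadraticgenerator2},
\[
  |\psi(s)|\le C\bigl(1+\mathbb E[|Z_s|^2]+|\mathbb E[Z_s]|^2\bigr)\le C\bigl(1+2\,\mathbb E[|Z_s|^2]\bigr),
\]
and since $\mathbb E\int_0^T|Z_s|^2\,ds\le\|Z\cdot W\|^2_{BMO_2(\mathbb P)}<\infty$ we get $\psi\in L^1(0,T)$, the measurability of $s\mapsto\psi(s)$ following from the adaptedness of $f_2$ and Fubini. In particular $t\mapsto\int_t^T\psi(s)\,ds$ is bounded, so $Y_t:=\widehat Y_t+\int_t^T\psi(s)\,ds$ is bounded, satisfies $Y_T=\xi$, and
\[
  Y_t=\xi+\int_t^T\bigl[f_1(s,Z_s)+\psi(s)\bigr]\,ds-\int_t^T Z_s\cdot dW_s,
\]
which by the definition of $\psi$ is precisely \eqref{simpleBSDE}. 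Thus $(Y,Z)$ is a solution of the required type.

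\emph{Uniqueness.} Conversely, let $(Y,Z)$ be any solution with $Y$ bounded and $Z\cdot W\in BMO_2(\mathbb P)$. The same estimate shows $\psi(s):=\mathbb E[f_2(s,Z_s,\mathbb E[Z_s])]\in L^1(0,T)$, and then $\widehat Y_t:=Y_t-\int_t^T\psi(s)\,ds$ solves the $\psi$--free quadratic BSDE displayed above. By the uniqueness assertion of Lemma~\ref{Quad bsde}, the pair $(\widehat Y,Z)$ is uniquely determined; hence $Z$ is unique, hence $\psi$ is unique, and therefore $Y=\widehat Y+\int_\cdot^T\psi(s)\,ds$ is unique as well. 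The one place where the hypotheses genuinely bite is the integrability of the deterministic drift $\psi$: for that one needs to know a priori that the $Z$--component of a bounded solution of a quadratic BSDE lies in $\mathcal M^2$, which is exactly the $BMO$ conclusion furnished by Lemma~\ref{Quad bsde}; without the quadratic growth control on $f_2$ the shift $\int_\cdot^T\psi(s)\,ds$ need not even be well defined. Everything else rests on the single remark that a deterministic drift does not interact with the stochastic integral, so the shift transformation leaves $Z$ --- and hence $\psi$ --- invariant.
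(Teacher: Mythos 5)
Your proof is correct and follows essentially the same route as the paper: solve the $f_1$-only quadratic BSDE to obtain $(\widehat Y,Z)$, then shift $Y$ by the deterministic drift $\int_t^T\mathbb E[f_2(s,Z_s,\mathbb E[Z_s])]\,ds$, and reverse the shift for uniqueness. You additionally justify the integrability of the shift via the BMO bound on $Z\cdot W$ and the quadratic growth of $f_2$, a detail the paper leaves implicit.
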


\begin{proof}
Let us first prove the existence. We solve this equation in two steps:

Step one. First solve the following BSDE:
\begin{equation}\label{bsdetilde}
    {\widetilde Y}_t=\xi+\int_t^Tf_1(s,Z_s)\, ds-\int_t^TZ_s\cdot dW_s.
\end{equation}
It is well known that this BSDE admits a unique solution $(\tilde{Y},Z)$ such that $\tilde{Y}$ is bounded and $Z\cdot W$ is a BMO martingale.

Step two. Define
\begin{equation}\label{}
    Y_t={\widetilde Y}_t+\int_t^TE[f_2(s,Z_s,\mathbb E[Z_s])]\, ds.
\end{equation}
Then
\begin{eqnarray}
  Y_t &=& \xi+\int_t^T\left[f_1(s,Z_s)+E[f_2(s,Z_s,\mathbb E[Z_s])\right]\, ds-\int_t^TZ_s\cdot dW_s, \quad t\in [0,T].
\end{eqnarray}

The uniqueness can be proved in a similar way:
Let $(Y^1,Z^1)$ and $(Y^2,Z^2)$ be two solutions. Then set
$$\tilde{Y}_t^1=Y_t^1-\int_t^T E[f_2(s,Z^1_s,\mathbb E[Z^1_s])]\, ds, \quad \tilde{Y}_t^2=Y^2_t-\int_t^T E[f_2(s,Z^2_s,\mathbb E[Z^2_s])]\, ds.$$
$(\tilde{Y}^1,Z^1)$ and $(\tilde{Y}^2,Z^2)$ being solution of the same BSDE (\ref{bsdetilde}), from the uniqueness of solution to this BSDE,
$$\tilde{Y}^1=\tilde{Y}^2,\quad Z^1=Z^2,$$
Hence $Y^1=Y^2$, and $Z^1=Z^2$.
\end{proof}

Now we consider a more general form of BSDE with a mean term:
\begin{eqnarray}\label{BSDEmean2}
Y_t& = &\xi+\int_t^T\left[f_1(s,Y_s,\mathbb E[Y_s],Z_s,\mathbb E[Z_s])+E[f_2(s,Y_s,\mathbb E[Y_s],Z_s,\mathbb E[Z_s])\right]\, ds\nonumber\\
& & -\int_t^TZ_s\cdot dW_s.
\end{eqnarray}
Here for $i=1,2$,  $f_i:\Omega\times [0,T]\times \mathbb R\times \mathbb R\times \mathbb R^d\times \mathbb R^d\rightarrow \mathbb R$ satisfies:
for any $(y,\bar{y},z,\bar{z})$, $f_i(t,y,\bar{y},z,\bar{z})$ is an adapted process, and
$$|f_1(t,y,\bar{y},0,\bar{z})|+|f_2(t,0,0,0,0)|\le C,$$
$$|f_1(t,y,z,\bar{y},\bar{z})-f_1(t,y',z',\bar{y}',\bar{z}')|\le C(|y-y'|+|\bar{y}-\bar{y}'|+|\bar{z}-\bar{z}'|+(1+|z|+|z'|)|z-z'|),$$
$$|f_2(t,y,z,\bar{y},\bar{z})-f_2(t,y',z',\bar{y}',\bar{z}')|\le C(|y-y'|+|\bar{y}-\bar{y}'|+(1+|z|+|z'|+|\bar{z}|+|\bar{z}'|)(|z-z'|+|\bar{z}-\bar{z}'|)).$$

We have the following result.

\begin{thm} Assume that $f_1$ and $f_2$ satisfy the above conditions, and $\xi$ is a bounded random variable.
BSDE (\ref{BSDEmean2}) has a unique solution $(Y,Z)$ such that $Y\in {\cal S}^\infty$ and
$Z\cdot W\in BMO_2(\mathbb P)$.
\end{thm}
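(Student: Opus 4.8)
The plan is to remove the additive term $E[f_2(\cdots)]$ by a deterministic shift of $Y$, as in the proof of the preceding proposition, to close the resulting coupling by a contraction on the shift, and to pass to a general horizon by concatenating short‑interval solutions as in the proof of Theorem~\ref{thm 2}. For a continuous deterministic function $\varphi$ I would first consider the mean‑field quadratic BSDE
\begin{equation}\label{pp:star}
\widetilde Y_t=\xi+\int_t^T f_1\big(s,\widetilde Y_s+\varphi_s,E[\widetilde Y_s]+\varphi_s,Z_s,E[Z_s]\big)\,ds-\int_t^T Z_s\cdot dW_s,\qquad t\in[0,T],
\end{equation}
so that $(Y,Z):=(\widetilde Y+\varphi,Z)$ solves \eqref{BSDEmean2} on a sub‑interval iff there $\varphi_t=\int_t^T E[f_2(s,\widetilde Y_s+\varphi_s,E[\widetilde Y_s]+\varphi_s,Z_s,E[Z_s])]\,ds$. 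Since $|f_1(s,\cdot,\cdot,0,\cdot)|\le C$ and $f_1$ is locally Lipschitz in $z$ with modulus $C(1+|z|+|z'|)$, the generator of \eqref{pp:star} satisfies assumption $(\mathscr{A}1)$ with $\alpha=0$ and $\gamma=3C$ (the shift only translating the arguments $y,\bar y$), so on a short interval Theorem~\ref{thm 1} provides a solution; moreover, regarding $E[\widetilde Y_s],E[Z_s]$ as known deterministic functions, the generator is bounded by $\tfrac{3C}{2}(1+|z|^2)$, and the last estimate of Lemma~\ref{Quad bsde} then yields the $\varphi$‑independent a priori bound $|\widetilde Y_t|\le|\xi|_\infty+\tfrac{3C}{2}(T-t)=:\lambda(t)$. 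It\^o's formula applied to $\phi(\widetilde Y)$ (the function $\phi$ from the proof of Theorem~\ref{thm 1}), exactly as for \eqref{quadratic estimate}, bounds $\|Z\cdot W\|_{BMO_2}^2$ by a constant $R_0$ depending only on $|\xi|_\infty,C,T$, and, together with the lemma on the two‑sided comparison of the $BMO_2(\mathbb P)$ norm of $M$ and the $BMO_2(\widetilde{\mathbb P})$ norm of $M-\langle M,N\rangle$, it also gives (on a short enough interval) uniqueness of the solution $(\widetilde Y^\varphi,Z^\varphi)$ of \eqref{pp:star} among bounded solutions.

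Next I would set up the fixed point. On $C([T-\eta,T])$ define $\Psi(\varphi)_t:=\int_t^T E\big[f_2\big(s,\widetilde Y^\varphi_s+\varphi_s,E[\widetilde Y^\varphi_s]+\varphi_s,Z^\varphi_s,E[Z^\varphi_s]\big)\big]\,ds$, so that $\varphi=\Psi(\varphi)$ is equivalent to $(\widetilde Y^\varphi+\varphi,Z^\varphi)$ solving \eqref{BSDEmean2} on $[T-\eta,T]$. Using $|E[f_2(\cdots)]|\le C(1+\lambda(0)+\|\varphi\|_\infty)+C(1+|Z^\varphi_s|+|E[Z^\varphi_s]|)(|Z^\varphi_s|+|E[Z^\varphi_s]|)$ and $\int E|Z^\varphi_s|^2\,ds\le R_0$, one gets $\|\Psi(\varphi)\|_\infty\le C\eta\|\varphi\|_\infty+c_0$ with $c_0=c_0(|\xi|_\infty,C,T)$, so $\Psi$ stabilises a large closed ball for $\eta$ small. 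For the contraction, the key input is the stability of \eqref{pp:star} in $\varphi$: linearising $f_1$ in $z$, so that $Z^\varphi-Z^{\varphi'}$ appears multiplied by a process $\beta$ with $\|\beta\cdot W\|_{BMO_2}\le K$ ($K$ uniform), passing to $d\widetilde{\mathbb P}=\mathscr{E}(\beta\cdot W)_0^T\,d\mathbb P$, using the above $BMO_2$‑comparison lemma to transfer estimates between $\mathbb P$ and $\widetilde{\mathbb P}$, and running the standard $L^2$‑estimate for the difference of two BSDEs, one obtains $\|\widetilde Y^\varphi-\widetilde Y^{\varphi'}\|_\infty+\|(Z^\varphi-Z^{\varphi'})\cdot W\|_{BMO_2}\le c_1\eta\|\varphi-\varphi'\|_\infty$ for $\eta$ small. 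Inserting this into the $f_2$‑Lipschitz estimate for $\Psi(\varphi)-\Psi(\varphi')$, and bounding the term $C(1+|Z^\varphi_s|+|Z^{\varphi'}_s|)\,|Z^\varphi_s-Z^{\varphi'}_s|$ by Cauchy--Schwarz against $R_0$ and $\|(Z^\varphi-Z^{\varphi'})\cdot W\|_{BMO_2}$, gives $\|\Psi(\varphi)-\Psi(\varphi')\|_\infty\le c_2\eta\|\varphi-\varphi'\|_\infty$, so $\Psi$ is a contraction for $\eta$ small; hence \eqref{BSDEmean2} has a unique solution on $[T-\eta,T]$ with $Y$ bounded and $Z\cdot W\in BMO_2(\mathbb P)$.

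Finally, to reach $[0,T]$ I would establish an a priori bound for every bounded solution $(Y,Z)$: the shift $\varphi_t=\int_t^T E[f_2(s,Y_s,E[Y_s],Z_s,E[Z_s])]\,ds$ is deterministic, $\|Y-\varphi\|_\infty\le\lambda(0)$ by the first step, and a Gronwall estimate for $\sup_{r\ge t}|\varphi_r|$ — again using $\int_t^T E|Z_s|^2\,ds\le R_0$, itself controlled through $\lambda(0)$ — produces a constant $b$ depending only on $|\xi|_\infty,C,T$ with $|Y_t|\le b$ and $\|Z\cdot W\|_{BMO_2}^2\le R_0$. Since the local step works with $\eta$ depending only on the size of the terminal datum, I would iterate starting from $\xi$ and then from $Y_{T-\eta},Y_{T-2\eta},\dots$, whose norms never exceed $b$, and reach $t=0$ in finitely many steps exactly as in the proof of Theorem~\ref{thm 2}, patching the pieces by uniqueness and recovering $Z\cdot W\in BMO_2(\mathbb P)$ from the a priori estimate on $[0,T]$; uniqueness on each short interval is the contraction property of $\Psi$ (equivalently, uniqueness for \eqref{pp:star} together with injectivity of the shift). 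The main difficulty is that the additive term $E[f_2(\cdots)]$ grows \emph{quadratically} in $Z$, so a careless estimate produces a term of order $\|Z\cdot W\|_{BMO_2}^2$ with a constant that need not be small, which would wreck both the a priori bound and the contraction: the shift is exactly what cures this for the a priori bound (the shifted generator has growth only in $z$, none in $y,\bar y,\bar z$, so $\widetilde Y^\varphi$ is bounded uniformly in $\varphi$), while the gain of a factor $\eta$ in the $BMO_2$‑stability of \eqref{pp:star}, obtained via Girsanov and the $BMO_2$‑comparison lemma, is what makes the Cauchy--Schwarz bound on the quadratic term in $\Psi(\varphi)-\Psi(\varphi')$ contractive in spite of the non‑small constant $\sqrt{R_0}$.
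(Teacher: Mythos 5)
Your argument is correct, but it takes a genuinely different route from the paper's. The paper's fixed point is the map $\Gamma:(U,V)\mapsto(Y,Z)$ on ${\cal S}^\infty\times BMO_2(\mathbb P)$: it freezes the arguments $(y,\bar y)$ of both generators and $\bar z$ of $f_1$ at $(U_s,\mathbb E[U_s],\mathbb E[V_s])$, so that the inner equation is a \emph{classical} quadratic BSDE (no mean--field coupling left) plus the additive term $\mathbb E[f_2]$, which is removed by the two-step shift of the preceding Proposition; the contraction is then run in ${\cal S}^\infty\times BMO_2(\mathbb P)$ with the same Girsanov / BMO--comparison / Cauchy--Schwarz machinery you invoke. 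You instead make the deterministic shift $\varphi$ itself the fixed-point variable, in $C([T-\eta,T])$, and keep the full mean--field coupling of $f_1$ inside the shifted inner equation, which you solve by Theorem~\ref{thm 1}. Both routes rest on the same two observations --- the additive $\mathbb E[f_2]$ term contributes nothing to $Z$ and is absorbed into a deterministic translation of $Y$, and the quadratic growth of $f_2$ in $Z$ is tamed by Cauchy--Schwarz against the $\varphi$-uniform a priori $BMO$ bound once the stability estimate carries a factor $\eta$ --- and both conclude by pasting over subintervals via an a priori bound on $\|Y\|_\infty$. Your version buys a fixed point on a space of deterministic functions (closer to the ``deterministic shift transformation'' announced in the introduction and to the compactness observation of Cheridito--Nam), at the price of needing the mean--field quadratic local theory of Theorem~\ref{thm 1} for the inner problem, its $\varphi$-uniform bounds, and a separate uniqueness-in-class argument for the shifted equation; the paper's version keeps the inner problem elementary (Lemma~\ref{Quad bsde} suffices) at the price of contracting in the larger space ${\cal S}^\infty\times BMO_2(\mathbb P)$. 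One point worth making explicit if you write this up: Theorem~\ref{thm 2} does \emph{not} apply to your shifted generator (its increment $h$ is only locally Lipschitz in $z$; consider $f_1=\sin(y)\sin(|z|^2)$, which satisfies all the stated hypotheses on $f_1$), so the global extension must indeed go through the iteration with the Gronwall bound on $\varphi$, as you propose, and cannot be obtained from a single application of the global theorem of Section~2.
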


\begin{exam} The condition on $f_1$ means that this function should be bounded with respect to
$(y,\bar{y},\bar{z})$. For example,
$$f_1(s,y,\bar{y},z,\bar{z})=1+|\sin(y)|+|\sin(\bar{y})|+\frac{1}{2}|z|^2+|\sin(\bar{z})|,$$
$$f_2(s,y,\bar{y},z,\bar{z})=1+|y|+|y'|+\frac{1}{2}(|z|+|\bar{z}|)^2.$$
\end{exam}
\begin{proof}
We prove the theorem by a fixed point argument. 
Let $U\in {\cal S}^{\infty}$, and $V\cdot W\in BMO_2(\mathbb P)$, we define 
$(Y,Z\cdot W)\in {\cal S}^\infty\times BMO_2(\mathbb P)$ as the unique solution to BSDE with mean:
\begin{eqnarray}\label{BSDEmean3}
Y_t& = &\xi+\int_t^T\left[f_1(s,U_s,\mathbb E[U_s],Z_s,\mathbb E[V_s])+E[f_2(s,U_s,\mathbb E[U_s],Z_s,\mathbb E[Z_s])\right]\, ds\nonumber\\
& & -\int_t^TZ_s\cdot dW_s.
\end{eqnarray}
And we define the map $\Gamma:(U,V)\mapsto \Gamma(U,V)=(Y,Z)$ on ${\cal S}^\infty\times BMO_2(\mathbb P)$.
Set 
$$\tilde{Y}_t=Y_t-\int_t^T E\left[f_2(s,U_s,\mathbb E[U_s],Z_s,\mathbb E[Z_s])\right]\, ds,$$
then $(\tilde{Y},Z)$ is the solution to
$$\tilde{Y}_t = \xi+\int_t^Tf_1(s,U_s,\mathbb E[U_s],Z_s,\mathbb E[V_s])\, ds-\int_t^TZ_s\, dB_s.$$
As $|f_1(t,y,\bar{y},0,\bar{z})|\le C$, we have
$$|f_1(t,y,\bar{y},z,\bar{z})|\le C+\frac{C}{2}|z|^2.$$
Applying Ito's formula to $\phi_C(|\tilde{Y}|)$,
\begin{equation}\label{phi}
\phi_C(|\tilde{Y}_t|)+\int_t^T\frac{1}{2}\phi_C^{''}(|\tilde{Y}_s|)|Z_s|^2ds\le\phi(\xi)+\int_t^T |\phi_C'(|\tilde{Y}_s|)|(C+\frac{C}{2}|Z_s|^2)ds-\int_0^T\phi'(|\tilde{Y}_s|)Z_sdW_s,
\end{equation}
we can prove that there exists a constant $K>0$ such that
$$||Z||_{BMO}\le K.$$

Let $(U^1,V^1)$ and $(U^2,V^2)$, and $(Y^1,Z^1)$ and $(Y^2,Z^2)$ be the corresponding solution,
and 
$$\tilde{Y}^1_t = \xi+\int_t^Tf_1(s,U^1_s,\mathbb E[U^1_s],Z^1_s,\mathbb E[V^1_s])\, ds-\int_t^TZ^1_s\, dB_s,$$
$$\tilde{Y}^2_t = \xi+\int_t^Tf_1(s,U^2_s,\mathbb E[U^2_s],Z^2_s,\mathbb E[V^2_s])\, ds-\int_t^TZ^2_s\, dB_s.$$
There exists a bounded adapted process $\beta$ such that
$$f_1(s,U^1_s,\mathbb E[U^1_s],Z^1_s,\mathbb E[V^1_s])-f_1(s,U^1_s,\mathbb E[U^1_s],Z^2_s,\mathbb E[V^1_s])=\beta_s (Z_s^1-Z_s^2).$$
Then $\widetilde{W}_t:=W_t-\int_0^t\beta_sds$ is a Brown motion under the equivalent probability measure $\tilde{\mathbb P}$ defined by
$$d\tilde{\mathbb P}={\cal E}_0^T(\beta\cdot W)_0^T d\mathbb P.$$
We have
\begin{eqnarray}
& &\Delta \tilde{Y}_t+\int_t^T\Delta Z_s \, d{\widetilde W}_s\\
&= &\int_t^T\left[f_1(s,U^1_s,\mathbb E[U^1_s],Z^2_s,\mathbb E[V^1_s])-f_1(s,U^2_s,\mathbb E[U^2_s],Z^2_s, \mathbb E[V^2_s])\right]ds, \quad t\in [0,T].\nonumber
\end{eqnarray}
 For any stopping time $\tau$ which takes values in $[T-\varepsilon, T]$, taking square and then the conditional expectation with respect to $\widetilde{\mathbb P}$ (denoted by $\widetilde{\mathbb E}_\tau$), 
 we have
\begin{eqnarray}
    \begin{split}
&|\Delta \tilde{Y}_\tau|^2+{\widetilde E}_{\tau}\int_\tau^T|\Delta Z|^2ds\\
=&{\widetilde E}_{\tau}\left(\int_t^T\left[f_1(s,U^1_s,\mathbb E[U^1_s],Z^2_s,\mathbb E[V^1_s])-f_1(s,U^2_s,\mathbb E[U^2_s],Z^2_s,\mathbb E[V^2_s])\right]\, ds\right)^2\\
\le \, & C^2{\widetilde E}_{\tau}\left(\left[\int_\tau^T\left(|\Delta U_s|+
|\mathbb E[\Delta U_s]|+|\mathbb E[\Delta V_s]|\right)ds\right]^2\right)\\
\le \, & C^2\varepsilon( |\Delta U|_\infty^2+|\Delta V\cdot W|^2_{BMO_2(\mathbb P)}).
\end{split}
\end{eqnarray}
Therefore, we have (on the interval $[T-\varepsilon, T]$)
\begin{equation}
 \left|\Delta \tilde{Y}\right|^2_\infty+c_1^2\left\|(\Delta Z)\cdot W\right\|^2_{BMO_2(\mathbb P)} 
\le \, C^2\varepsilon( |\Delta U|_\infty^2+|\Delta V\cdot W|^2_{BMO_2(\mathbb P)}).
\end{equation}
Note that since
$$|\beta|\le C(1+|Z_1|+|Z_2), $$
the  generic constant $c_1$ and $c_2$ only depend on $C$ and $K$.

As
$$\Delta Y_t=\Delta \tilde{Y}_t+\int_t^T E\left[f_2(s,U^1_s,\mathbb E[U^1_s],Z^1_s,\mathbb E[Z^2_s])-f_2(s,U^2_s,\mathbb E[U^2_s],Z^2_s,\mathbb E[Z^2_s])\right]ds,$$
then
\begin{eqnarray*}|\Delta Y_t|&\le& |\Delta \tilde{Y}_t|+CE\left[\int_t^T \left(|\Delta U_s|+|\mathbb E\Delta U_s|\right)ds\right]\\
& &+CE\left[\int_t^T (1+|Z_s^1|+|Z_s^2|+|\mathbb E Z_s^1|+|\mathbb E Z_s^2|)\left(|\Delta Z_s|+|\mathbb E\Delta Z_s|\right)ds\right]
\end{eqnarray*}
we deduce that
\begin{eqnarray*}
|\Delta Y_t|^2&\le& C^2\Big(|\Delta \tilde{Y}_t|^2+E\left[\int_t^T \left(|\Delta U_s|+|\mathbb E\Delta U_s|\right)ds\right]^2\\
& &E\left[\int_t^T (1+|Z_s^1|+|Z_s^2|+|\mathbb E [Z_s^1]|+|\mathbb E [Z_s^2]|)\left(|\Delta Z_s|+|\mathbb E\Delta Z_s|\right)ds\right]^2\Big),
\end{eqnarray*}
which implies that
\begin{eqnarray*}
|\Delta Y|^2_\infty
&\le& \, C^2(|\Delta \tilde{Y}|_\infty^2+ \varepsilon |\Delta U|^2_\infty+  |(\Delta Z)\cdot W|^2_{BMO_2(\mathbb P)} )\\
&\le& \,C^2\varepsilon\left( |\Delta U|^2_\infty+||V\cdot W||_{BMP_2(P)}^2\right). 
\end{eqnarray*}

Then when $\varepsilon$ is sufficiently small, we conclude that the application is contracting on $[T-\varepsilon, T]$. Repeating iteratively with a finite of times, we have the existence and uniqueness
on the given interval $[0, T]$.
\end{proof}

\section{Multi-dimensional Case}

In this section, we will study the multi-dimensional case of (\ref{BSDEmean2}), where $f_1$ is Lipschitz.
Our result generalizes the corresponding one of Cheridito and Nam \cite{CheriditoNam2}.

We first consider the following  BSDE with mean term:
\begin{eqnarray}\label{simpleBSDElast}
  Y_t &=& \xi+\int_t^T\left[f_1(s,Z_s,\mathbb E[Z_s])+E[f_2(s,Z_s,\mathbb E[Z_s])\right]\, ds-\int_t^TZ_s\cdot dW_s, \quad t\in [0,T]，
\end{eqnarray}
where $f_1:\Omega\times [0,T]\times R^{d\times n}\times \mathbb R^{d\times n}\rightarrow R^n$ satisfies:
for any $z$ and $\bar{z}$, $f_1(t,z,\bar{z})$ is an adapted process, and
\begin{equation}\label{quadraticgenerator1last}
|f_i(t,0,0)|\le C,\quad |f_1(t,z,\bar{z})-f_i(t,z',\bar{z}')|\le C(|z-z'|+|\bar{z}-\bar{z}'|),
\end{equation}
and $f_2:\Omega\times [0,T]\times R^{d\times n} \times R^{d\times n}\rightarrow R^n$ satisfies:
for any $z,\bar{z}$, $f_2(t,z,\bar{z})$ is an adapted process, and
\begin{equation}\label{quadraticgenerator2}
|f_2(t,0,0)|\le C,\quad |f_2(t,z,\bar{z})-f_2(t,z',\bar{z}')|\le C(1+|z|+|z'|+|\bar{z}|+|\bar{z}'|)(|z-z'|+|\bar{z}-\bar{z}'|).
\end{equation}

\begin{prop}
Let us suppose that $f_1$ and $f_2$ be two  generators satisfying the above conditions and $\xi
\in L^2({\cal F}_T)$.
 Then (\ref{simpleBSDElast}) admits a unique solution $(Y,Z)$ such that $Y\in {\cal S}^2$ and $Z\in {\cal M}^2$.
\end{prop}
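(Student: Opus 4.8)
The plan is to reproduce the shift-transformation argument already used for BSDE~\eqref{simpleBSDE}, exploiting the decisive fact that, once the expectation has been taken, the map $s\mapsto \mathbb E[f_2(s,Z_s,\mathbb E[Z_s])]$ is a \emph{deterministic} ($\mathbb R^n$-valued) function of time, hence contributes only a purely absolutely continuous, $\mathscr F_0$-measurable term to the dynamics and therefore has no effect on the martingale (i.e. the $Z$) part of the equation. Accordingly I would split the proof of existence into two steps, exactly as for \eqref{simpleBSDE}.

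\emph{Step one.} Solve the mean-field BSDE
\[
\widetilde Y_t=\xi+\int_t^T f_1(s,Z_s,\mathbb E[Z_s])\,ds-\int_t^T Z_s\cdot dW_s .
\]
Since $f_1$ is globally Lipschitz in $(z,\bar z)$ and $\xi\in L^2(\mathscr F_T)$, this is a standard multi-dimensional mean-field BSDE with a Lipschitz generator, so by the classical theory (Buckdahn--Li--Peng \cite{BuckdahnLiPeng}) it has a unique solution $(\widetilde Y,Z)\in{\cal S}^2(\mathbb R^n)\times{\cal M}^2(\mathbb R^{d\times n})$.

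\emph{Step two.} From $|f_2(t,0,0)|\le C$ and the local-Lipschitz bound on $f_2$ one gets $|f_2(t,z,\bar z)|\le C(1+|z|^2+|\bar z|^2)$, so that, using $|\mathbb E[Z_s]|^2\le\mathbb E[|Z_s|^2]$,
\[
\int_0^T\mathbb E\big[|f_2(s,Z_s,\mathbb E[Z_s])|\big]\,ds\le C\int_0^T\big(1+2\,\mathbb E[|Z_s|^2]\big)\,ds<\infty
\]
because $Z\in{\cal M}^2$. Hence $\varphi(t):=\int_t^T\mathbb E[f_2(s,Z_s,\mathbb E[Z_s])]\,ds$ is a well-defined, deterministic, continuous and bounded $\mathbb R^n$-valued function on $[0,T]$. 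Setting $Y_t:=\widetilde Y_t+\varphi(t)$ we get $Y\in{\cal S}^2$ (sum of an $\mathcal S^2$ process and a bounded deterministic function), and substituting the equation for $\widetilde Y$ shows at once that $(Y,Z)$ solves \eqref{simpleBSDElast}. For uniqueness, if $(Y^1,Z^1)$ and $(Y^2,Z^2)$ are two solutions in ${\cal S}^2\times{\cal M}^2$, put $\widetilde Y^i_t:=Y^i_t-\int_t^T\mathbb E[f_2(s,Z^i_s,\mathbb E[Z^i_s])]\,ds$ (again finite by the growth bound on $f_2$ and $Z^i\in{\cal M}^2$); then $(\widetilde Y^i,Z^i)$ both solve the Lipschitz mean-field BSDE of Step one with terminal value $\xi$, so by its uniqueness $Z^1=Z^2$ and $\widetilde Y^1=\widetilde Y^2$, whence $Y^1=Y^2$.

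The only delicate point—hence the ``main obstacle'', though a mild one—is the integrability of $s\mapsto\mathbb E[f_2(s,Z_s,\mathbb E[Z_s])]$ on $[0,T]$: this is precisely where the at-most-quadratic growth of $f_2$ and the membership $Z\in{\cal M}^2$ enter, and it is what makes the deterministic shift $\varphi$ legitimate. Apart from this, the proof is a transcription of the one for \eqref{simpleBSDE}; the quadratic growth of $f_2$ causes no difficulty here because only its expectation, a deterministic function of time, appears in the equation.
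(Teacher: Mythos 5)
Your proposal is correct and follows essentially the same route as the paper: solve the Lipschitz mean-field BSDE driven by $f_1$ alone (via Buckdahn--Li--Peng) to obtain $(\widetilde Y,Z)$, then add the deterministic shift $\int_t^T\mathbb E[f_2(s,Z_s,\mathbb E[Z_s])]\,ds$ to recover $Y$, with uniqueness obtained by reversing the shift. Your explicit verification that the shift is finite (using the quadratic growth of $f_2$ and $Z\in{\cal M}^2$) is a point the paper leaves implicit, but the argument is the same.
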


\begin{proof}
Let us first prove the existence. Our proof is divided into the following two steps.

Step one. First consider the following BSDE
\begin{equation}\label{bsdetilde}
    {\widetilde Y}_t=\xi+\int_t^Tf_1(s,Z_s,\mathbb E[Z_s])\, ds-\int_t^TZ_s\cdot dW_s, \quad t\in [0,T].
\end{equation}
It admits a unique solution $(\tilde{Y},Z)$ such that $\tilde{Y}\in {\cal S}^2$ and $Z\in {\cal M}^2$, see Buckdahn et al. \cite{BuckdahnLiPeng}.

Step two. Define
\begin{equation}\label{}
    Y_t={\widetilde Y}_t+\int_t^TE[f_2(s,Z_s,\mathbb E[Z_s])]\, ds,\quad t\in [0,T].
\end{equation}
Then, we have for $t\in [0,T]$,
\begin{eqnarray}
  Y_t &=& \xi+\int_t^T\left[f_1(s,Z_s,\mathbb E[Z_s])+E[f_2(s,Z_s,\mathbb E[Z_s])\right]\, ds-\int_t^TZ_s\cdot dW_s.
\end{eqnarray}

The uniqueness can be proved in a similar way:
Let $(Y^1,Z^1)$ and $(Y^2,Z^2)$ be two solutions. Then set
$$\tilde{Y}_t^1=Y_t^1-\int_t^T E[f_2(s,Z^1_s,\mathbb E[Z^1_s])]\, ds, \quad \tilde{Y}_t^2=Y^2_t-\int_t^T E[f_2(s,Z^2_s,\mathbb E[Z^2_s])]\, ds.$$
$(\tilde{Y}^1,Z^1)$ and $(\tilde{Y}^2,Z^2)$ being solution of the same BSDE (\ref{bsdetilde}), from the uniqueness of solution to this BSDE,
$$\tilde{Y}^1=\tilde{Y}^2,\quad Z^1=Z^2,$$
Hence $Y^1=Y^2$, and $Z^1=Z^2$.
\end{proof}

Now we consider the following more general form of BSDE with a mean term:
\begin{eqnarray}\label{BSDEmean2last}
Y_t& = &\xi+\int_t^T\left[f_1(s,Y_s,\mathbb E[Y_s],Z_s,\mathbb E[Z_s])+E[f_2(s,Y_s,\mathbb E[Y_s],Z_s,\mathbb E[Z_s])\right]\, ds\nonumber\\
& & -\int_t^TZ_s\cdot dW_s, t\in [0,T].
\end{eqnarray}
Here for $i=1,2$,  $f_i:\Omega\times [0,T]\times \mathbb R^n\times \mathbb R^n\times \mathbb R^{d\times n}\times \mathbb R^{d\times n}\rightarrow \mathbb R^n$ satisfies:
for any $(y,\bar{y},z,\bar{z})$, $f_i(t,y,\bar{y},z,\bar{z})$ is an adapted process, and
\begin{equation}\label{xxx1}
|f_1(t,y,\bar{y},0,0)|+|f_2(t,0,0,0,0)|\le C,
\end{equation}
\begin{equation}\label{xxx2}
|f_1(t,y,z,\bar{y},\bar{z})-f_1(t,y',z',\bar{y}',\bar{z}')|\le C(|y-y'|+|\bar{y}-\bar{y}'|+|z-z'|+|\bar{z}-\bar{z}'|),
\end{equation}
\begin{equation}\label{xxx3}
|f_2(t,y,z,\bar{y},\bar{z})-f_2(t,y',z',\bar{y}',\bar{z}')|\le C(|y-y'|+|\bar{y}-\bar{y}'|+(1+|z|+|z'|+|\bar{z}|+|\bar{z}'|)(|z-z'|+|\bar{z}-\bar{z}'|)).
\end{equation}

We have the following result.

\begin{thm} Assume that $f_1$ and $f_2$ satisfy the inequalities (\ref{xxx1})-(\ref{xxx3}), and $\xi\in L^2({\cal F}_T)$. Then
BSDE (\ref{BSDEmean2last}) has a unique solution $(Y,Z)$ such that $Y\in {\cal S}^2$ and
$Z\in {\cal M}^2$.
\end{thm}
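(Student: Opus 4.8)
The plan is to follow the same two-layer strategy used in Sections~2 and~3 and in the preceding Proposition: first use the shift transformation to remove the additive term $E[f_2(\cdot)]$ --- which affects only the first component of the solution and leaves the martingale part untouched --- and then run a contraction argument on a short time interval to handle the remaining $Y$-dependence, finally patching the local solutions. Concretely, for $U\in{\cal S}^2$ on $[T-\varepsilon,T]$ I would first solve the mean-field BSDE
$$\widetilde Y_t=\xi+\int_t^T f_1(s,U_s,\mathbb E[U_s],Z_s,\mathbb E[Z_s])\,ds-\int_t^T Z_s\cdot dW_s ,$$
whose driver is uniformly Lipschitz in $(z,\bar z)$ and whose free term $f_1(s,U_s,\mathbb E[U_s],0,0)$ is bounded by $C$ in view of (\ref{xxx1})--(\ref{xxx2}); by the Lipschitz theory of Buckdahn et al.\ \cite{BuckdahnLiPeng} it has a unique solution $(\widetilde Y,Z)\in{\cal S}^2\times{\cal M}^2$, with $\|\widetilde Y\|_{{\cal S}^2}^2+\|Z\|_{{\cal M}^2}^2\le C(1+E|\xi|^2)$ on $[T-\varepsilon,T]$, uniformly in $U$. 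Since $|f_2(s,\cdot,\cdot,z,\bar z)|\le C(1+|z|^2+|\bar z|^2)$ and $Z\in{\cal M}^2$, the function $s\mapsto E[f_2(s,U_s,\mathbb E[U_s],Z_s,\mathbb E[Z_s])]$ is bounded and in $L^1(0,T)$, so
$$Y_t:=\widetilde Y_t+\int_t^T E\big[f_2(s,U_s,\mathbb E[U_s],Z_s,\mathbb E[Z_s])\big]\,ds$$
adds only a bounded deterministic process to $\widetilde Y$; hence $Y\in{\cal S}^2$, and $(Y,Z)$ solves the version of (\ref{BSDEmean2last}) in which the $Y$- and $\mathbb E[Y]$-arguments of $f_1,f_2$ are frozen to $U,\mathbb E[U]$. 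This defines $\Gamma(U):=Y$, a self-map of ${\cal S}^2([T-\varepsilon,T])$.

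The a priori estimates are routine energy estimates (It\^o applied to $|\widetilde Y|^2$, Young's inequality, Gronwall), but one point is worth isolating: working on the short interval $[T-\varepsilon,T]$ with terminal value $\xi$ at $T$ one has $\|Z\|_{{\cal M}^2,[T-\varepsilon,T]}^2=E\int_{T-\varepsilon}^T|Z_s|^2\,ds\le\rho(\varepsilon)$, where $\rho(\varepsilon)\to0$ as $\varepsilon\to0$ uniformly in $U$ (write $E\int_{T-\varepsilon}^T|Z_s|^2ds=E|N_T-N_{T-\varepsilon}|^2$ for the associated $L^2$-martingale $N$ and use its $L^2$-continuity). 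Together with the uniform bound on $\|\widetilde Y\|_{{\cal S}^2}$ this makes the ball $\mathscr B_\varepsilon:=\{U:\|U\|_{{\cal S}^2,[T-\varepsilon,T]}\le M_0\}$ invariant under $\Gamma$, for a fixed $M_0$ and small $\varepsilon$ (both depending only on $C$, $T$, $\|\xi\|_{L^2}$), because the extra term $\int_{T-\varepsilon}^T E[f_2(s)]\,ds$ is then of size $\le C\varepsilon+C\rho(\varepsilon)$.

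For the contraction, take $U^1,U^2\in\mathscr B_\varepsilon$ with images $(Y^i,Z^i)$ and $\widetilde Y^i$. The difference satisfies the linear mean-field BSDE $\Delta\widetilde Y_t=\int_t^T\big(\beta^1_s\cdot\Delta Z_s+\beta^2_s\cdot\mathbb E[\Delta Z_s]+g_s\big)\,ds-\int_t^T\Delta Z_s\cdot dW_s$, where $\beta^1,\beta^2$ are \emph{bounded} because $f_1$ is Lipschitz (not quadratic) in $(z,\bar z)$ --- this is why, unlike in Sections~2--3, no change of probability is needed --- and $|g_s|\le C(|\Delta U_s|+|\mathbb E[\Delta U_s]|)$. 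An energy estimate (It\^o on $|\Delta\widetilde Y|^2$, absorbing $\beta^1\cdot\Delta Z$ by Young and $\beta^2\cdot\mathbb E[\Delta Z]$ by Young together with Jensen, then Gronwall) gives $\|\Delta\widetilde Y\|_{{\cal S}^2}^2+\|\Delta Z\|_{{\cal M}^2}^2\lesssim\varepsilon\,\|\Delta U\|_{{\cal S}^2}^2$ on $[T-\varepsilon,T]$. Since $\Delta Y_t=\Delta\widetilde Y_t+\int_t^T E[\Delta f_2(s)]\,ds$, and by (\ref{xxx3}) $|E[\Delta f_2(s)]|\le C\,E\big[|\Delta U_s|+|\mathbb E[\Delta U_s]|+(1+|Z^1_s|+|Z^2_s|+|\mathbb E[Z^1_s]|+|\mathbb E[Z^2_s]|)(|\Delta Z_s|+|\mathbb E[\Delta Z_s]|)\big]$, Cauchy--Schwarz yields $\int_{T-\varepsilon}^T|E[\Delta f_2(s)]|\,ds\le C\varepsilon\|\Delta U\|_{{\cal S}^2}+C\big(\varepsilon^{1/2}+\|Z^1\|_{{\cal M}^2,[T-\varepsilon,T]}+\|Z^2\|_{{\cal M}^2,[T-\varepsilon,T]}\big)\|\Delta Z\|_{{\cal M}^2,[T-\varepsilon,T]}$, which by the previous display and the uniform smallness $\|Z^i\|_{{\cal M}^2,[T-\varepsilon,T]}^2\le\rho(\varepsilon)$ is $\le(\text{const}\,\varepsilon^{1/2}+\text{const}\,\rho(\varepsilon)^{1/2})\|\Delta U\|_{{\cal S}^2}$. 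Hence $\|\Gamma(U^1)-\Gamma(U^2)\|_{{\cal S}^2}\le\tfrac12\|\Delta U\|_{{\cal S}^2}$ for $\varepsilon$ small, so $\Gamma$ has a unique fixed point on $\mathscr B_\varepsilon$. Since $\varepsilon$ does not depend on the terminal time used, one solves successively on $[T-\varepsilon,T]$, $[T-2\varepsilon,T-\varepsilon],\dots$ and glues the pieces; uniqueness on each subinterval follows from the same difference estimate, hence on $[0,T]$.

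The step I expect to be the main obstacle is reconciling the quadratic growth of $f_2$ in $(z,\bar z)$ with the purely ${\cal M}^2$-framework: because $\xi$ lies only in $L^2$, $Z\cdot W$ need not be a BMO martingale, so the BMO bookkeeping of Sections~2--3 is unavailable and a crude bound on $(1+|Z^1|+|Z^2|)(|\Delta Z|+|\mathbb E[\Delta Z]|)$ is not even integrable. Everything hinges on the a priori fact that $\|Z\|_{{\cal M}^2}$ over a short interval is small, uniformly over the relevant ball, and on arranging the iteration so that this smallness simultaneously preserves the ball and provides the contraction factor.
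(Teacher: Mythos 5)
Your proposal is correct and follows essentially the same route as the paper: freeze the $(y,\bar y)$ arguments at $(U_s,\mathbb E[U_s])$, use the shift transformation $Y_t=\widetilde Y_t+\int_t^T \mathbb E[f_2(\cdot)]\,ds$ to reduce to a Lipschitz mean-field BSDE for $(\widetilde Y,Z)$, establish the a priori bound on $\|Z\|_{{\cal M}^2}$, and close a contraction on a short interval before patching. The only (harmless) deviation is your insistence on the uniform smallness $\rho(\varepsilon)$ of $\|Z\|^2_{{\cal M}^2,[T-\varepsilon,T]}$; the paper gets by with the uniform bound $\|Z^i\|_{{\cal M}^2}\le K$ alone, since the factor $\|\Delta Z\|_{{\cal M}^2}\lesssim \varepsilon^{1/2}\|\Delta U\|_{{\cal S}^2}$ already supplies the contraction constant.
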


\begin{exam} The condition (\ref{xxx1}) requires that both functions $f_1$ and $f_2$ should be bounded with respect to
$(y,\bar{y})$. For example,
$$f_1(s,y,\bar{y},z,\bar{z})=1+|\sin(y)|+|\sin(\bar{y})|+|z|+|\bar{z}|,$$
$$f_2(s,y,\bar{y},z,\bar{z})=1+|y|+|y'|+\frac{1}{2}(|z|+|\bar{z}|)^2.$$
\end{exam}
\begin{proof}
We prove the theorem by a fixed point argument. 
Let $U\in {\cal S}^{2}$, and $V\in{\cal M}^2$, we define 
$(Y,Z\cdot W)\in {\cal S}^2\times {\cal M}^2$ as the unique solution to BSDE with mean:
\begin{eqnarray}\label{BSDEmean3last}
Y_t& = &\xi+\int_t^T\left[f_1(s,U_s,\mathbb E[U_s],Z_s,\mathbb E[Z_s])+E[f_2(s,U_s,\mathbb E[U_s],Z_s,\mathbb E[Z_s])\right]\, ds\nonumber\\
& & -\int_t^TZ_s\cdot dW_s.
\end{eqnarray}
We define the map $\Gamma:(U,V)\mapsto \Gamma(U,V)=(Y,Z)$ on ${\cal S}^2\times {\cal M}^2$.
Set 
$$\tilde{Y}_t=Y_t-\int_t^T E\left[f_2(s,U_s,\mathbb E[U_s],Z_s,\mathbb E[Z_s])\right]\, ds,\quad t\in [0,T].$$
Then $(\tilde{Y},Z)$ is the solution to
$$\tilde{Y}_t = \xi+\int_t^Tf_1(s,U_s,\mathbb E[U_s],Z_s,\mathbb E[V_s])\, ds-\int_t^TZ_s\cdot dW_s,  t\in [0,T].$$
As $|f_1(t,y,\bar{y},0,0)|\le C$, we have
$$|f_1(t,y,\bar{y},z,\bar{z})|\le C(1+|z|+|\bar{z}|).$$
Applying Ito's formula to $|\tilde{Y}|^2$, we have
\begin{equation}\label{philast}
|\tilde{Y}_t|^2+\int_t^T|Z_s|^2ds\le|\xi|^2+2C\int_t^T |\tilde{Y}_s|(C+|Z_s|+|\mathbb E[ Z]|)ds-2\int_t^T(Y_s,Z_s\cdot dW_s).
\end{equation}
Further, using standard techniques, we can prove that there exists a constant $K>0$ such that
$$||Z||_{{\cal M}^2}\le K.$$

For $(U^i,V^i)\in {\cal S}^2\times {\cal M}^2$, define $(Y^i,Z^i)=\Gamma(U^i,V^i)$ with $i=1,2$. Further, set for $t\in [0,T]$, 
$$\tilde{Y}^1_t: = \xi+\int_t^Tf_1(s,U^1_s,\mathbb E[U^1_s],Z^1_s,\mathbb E[V^1_s])\, ds-\int_t^TZ^1_s\cdot  dW_s,$$
$$\tilde{Y}^2_t: = \xi+\int_t^Tf_1(s,U^2_s,\mathbb E[U^2_s],Z^2_s,\mathbb E[V^2_s])\, ds-\int_t^TZ^2_s\cdot dW_s.$$

We have
\begin{eqnarray}
& &\Delta \tilde{Y}_t+\int_t^T\Delta Z_s \, d{ W}_s\\
&= &\int_t^T\left[f_1(s,U^1_s,\mathbb E[U^1_s],Z^1_s,\mathbb E[Z^1_s])-f_1(s,U^2_s,\mathbb E[U^2_s],Z^2_s, \mathbb E[Z^2_s])\right]ds, \quad t\in [0,T].\nonumber
\end{eqnarray}
 For any $t\in [T-\varepsilon, T]$, taking square and then  expectations on both sides of the last equality, 
 we have
\begin{eqnarray}
    \begin{split}
&\mathbb E[|\Delta \tilde{Y}_t|^2]+{\mathbb  E}\left[\int_t^T|\Delta Z_s|^2ds\right]\\
=&{\mathbb E}_{\tau}\left(\int_t^T\left[f_1(s,U^1_s,\mathbb E[U^1_s],Z^1_s,\mathbb E[Z^1_s])-f_1(s,U^2_s,\mathbb E[U^2_s],Z^2_s,\mathbb E[Z^2_s])\right]\, ds\right)^2\\
\le \, & C^2{\mathbb E}\left(\left[\int_\tau^T\left(|\Delta U_s|+
|\mathbb E[\Delta U_s]|+|\Delta Z_s|+
|\mathbb E[\Delta Z_s]|\right)ds\right]^2\right)\\
\le \, & C^2\varepsilon( |\Delta U|_{{\cal S}^2}+|\Delta Z|^2_{{\cal M}^2}).
\end{split}
\end{eqnarray}
Therefore, we have (on the interval $[T-\varepsilon, T]$) for a sufficiently small $\varepsilon>0$,
\begin{equation}\label{4.12}
 \left|\Delta \tilde{Y}\right|_{{\cal S}^2}^2+\left\|\Delta Z\right\|^2_{{\cal M}^2} 
\le \, C^2\varepsilon( |\Delta U|_{{\cal S}^2}^2).
\end{equation}

As
$$\Delta Y_t=\Delta \tilde{Y}_t+\int_t^T E\left[f_2(s,U^1_s,\mathbb E[U^1_s],Z^1_s,\mathbb E[Z^1_s])-f_2(s,U^2_s,\mathbb E[U^2_s],Z^2_s,\mathbb E[Z^2_s])\right]ds,$$
we have
\begin{eqnarray*}|\Delta Y_t|&\le& |\Delta \tilde{Y}_t|+CE\left[\int_t^T \left(|\Delta U_s|+|\mathbb E\Delta U_s|\right)ds\right]\\
& &+CE\left[\int_t^T (1+|Z_s^1|+|Z_s^2|+|\mathbb E Z_s^1|+|\mathbb E Z_s^2|)\left(|\Delta Z_s|+|\mathbb E\Delta Z_s|\right)ds\right].
\end{eqnarray*}
Moreover, we deduce that 
\begin{eqnarray*}
|\Delta Y_t|^2&\le& C^2\biggl(|\Delta \tilde{Y}_t|^2+E\left[\int_t^T \left(|\Delta U_s|+|\mathbb E\Delta U_s|\right)ds\right]^2\\
& &+E\left[\int_t^T (1+|Z_s^1|+|Z_s^2|+|\mathbb E Z_s^1|+|\mathbb E Z_s^2|)\left(|\Delta Z_s|+|\mathbb E\Delta Z_s|\right)ds\right]^2\biggl).
\end{eqnarray*}
In view of (\ref{4.12})
\begin{eqnarray*}
\\\left|\Delta \tilde{Y}\right|_{{\cal S}^2}^2+\left\|\Delta Z\right\|^2_{{\cal M}^2} 
&\le& \,C^2\varepsilon\left(|\Delta U|_{{\cal S}^2}^2 \right). 
\end{eqnarray*}

Then when $\varepsilon$ is sufficiently small, we conclude that the application is contracting on $[T-\varepsilon, T]$. Repeating iteratively with a finite of times, we have the existence and uniqueness
on the given interval $[0, T]$.

\end{proof}

\bibliographystyle{siam}

\end{document}